\def\BState{\State\hskip-\ALG@thistlm}
 \newtheorem{theorem}{Theorem}[section]
 \newtheorem{lemma}[theorem]{Lemma}
 \theoremstyle{definition}
\newtheorem{definition}{Definition}[section]
\newcommand{\vertiii}[1]{{\left\vert\kern-0.25ex\left\vert\kern-0.25ex\left\vert #1 
    \right\vert\kern-0.25ex\right\vert\kern-0.25ex\right\vert}}
\begin{document}
\title{A priori error estimation for elastohydrodynamic lubrication using interior-exterior penalty approach}
\date{}
\author{Peeyush Singh$^{1}$ \\
 \small \noindent $^1$Vellore Institute of Technology-AP, University\\
  \small \noindent Department of Mathematics, Andhra Pradesh-522237, India
 \\  
  \small \noindent $^1$E-mail: peeyush.singh@vitap.ac.in\\ 
  }
\maketitle
\thispagestyle{empty}
\begin{abstract}
In the present study, an interior-exterior penalty discontinuous Galerkin finite element method (DG-FEM) is analysed for solving Elastohydrodynamic lubrication (EHL) line and point contact problems. The existence of discrete penalized solution is examined using Brouwer's fixed point theorem. Furthermore, the uniqueness of solution is proved using Lipschitz continuity of the discrete solution map under light load parameter assumptions.
A priori error estimates are achieved in $L^{2}$ and $H^{1}$ norms which are shown to be optimal in mesh size $h$ and suboptimal in polynomial degree $p$.
The validity of theoretical findings are confirmed through series of  numerical experiments.  
\end{abstract}
\vspace{0.2in}
\noindent{\sc Keywords:} \ Elasto-hydrodynamic lubrication, Discontinuous Finite Element Method, 
interior-exterior penalty method, pseudo-monotone operators, quasi-variational inequality.\\
\newpage
\maketitle
\section{Introduction}
Discontinuous Galerkin finite element methods (DG-FEMs) are now widely used in scientific computation to achieve better accuracy and their ﬂexibility in handling nonuniform degrees of approximation as well as flexibility in local mesh adaptivity. While DG-FEMs have been profoundly shown to be very successful, the theory ensuring the convergence of the algorithm and the 
advantages over Elastohydrodynamic Lubrication (EHL) line and point contacts problems still under development. Recently, several results
have been obtained for DG-FEMs for strongly nonlinear elliptic partial differential equations (see for example \cite{gudi07,gudi08}).

This paper presents the mathematical analysis of DG-FEMs  for EHL problems using interior-exterior penalty approach.
However, the idea is more generic and it can be easily extended to more general variational inequality problems too.
In this study, author demonstrates priori error estimate in the underline norm, prove existence and uniqueness of the discrete DG-FEM formulation of the EHL problem and find the optimal and suboptimal rate of convergence under discussed norm.
Recently, a nearby method to approximate the EHL solution is to solve the analogue discrete inequality using discontinuous Galerkin finite volume method (DG-FVM) approach discussed (see for example \cite{peeyush2020}) which is restricted under lower regularity assumptioms of penalized soultion of EHL point contact problem.\\
Exterior penalty methods has a significant role in from of conceptual perspective. One striaght way justification is, it convert inequalities to equation.
A priori estimates for finite element methods for elliptic obstacle problems were proved in \cite{Scholz} using regularization technique imposing the unilateral constraint approximately through a penalty term depending on a regularization
parameter $\epsilon$ and relating the mesh size $h$ of the finite element mesh to the regularization parameter $\epsilon$.
By motivated by the same route, in this article author prove priori error estimate of discontinuous Galerkin-finite element methods using interior-exterior penality approach.
\subsection{Continuous EHL Model Problems}
\subsubsection{Line contact model}
Two cylinder rolling in the positive $x$-direction seperated with lubricant (oil, liquid etc) are modelled in the form of
variational inequality as
\begin{figure}
\centering
\includegraphics[width=4.0in, height=3.0in, angle=0]{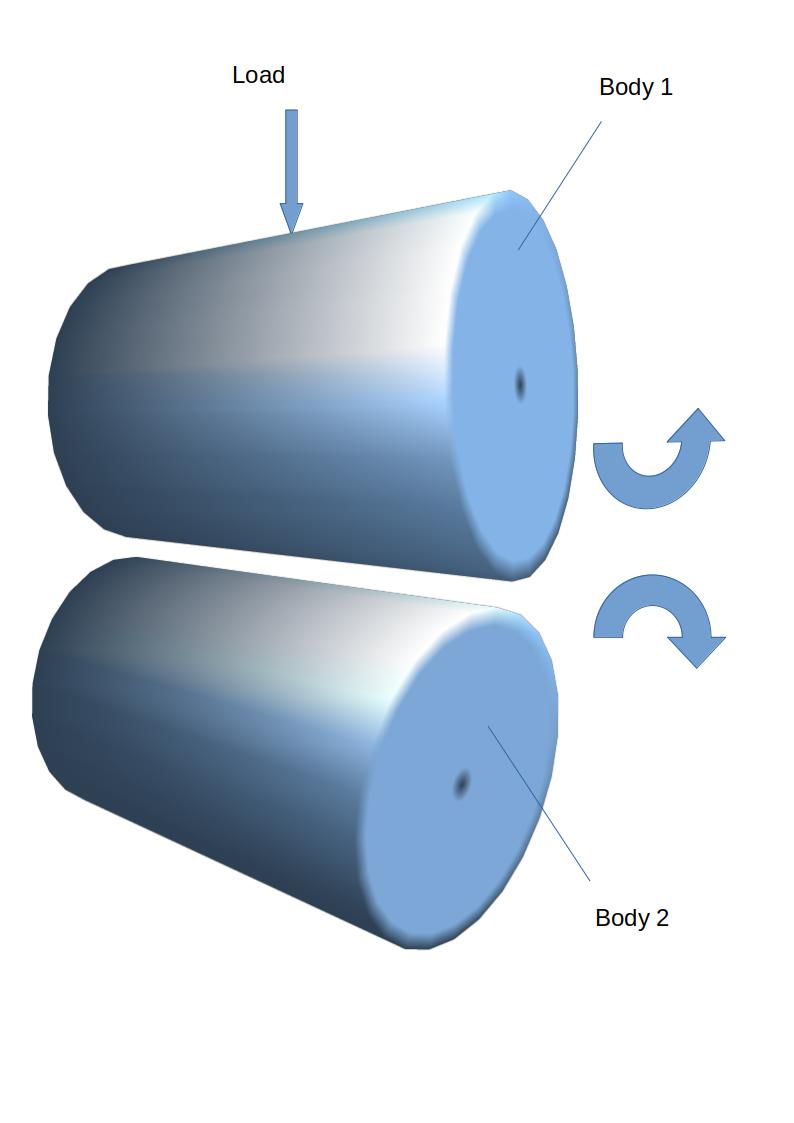}
\caption{Line contact schematic diagram}
\label{fig:defm}
\centering
\includegraphics[width=4.0in, height=3.in, angle=0]{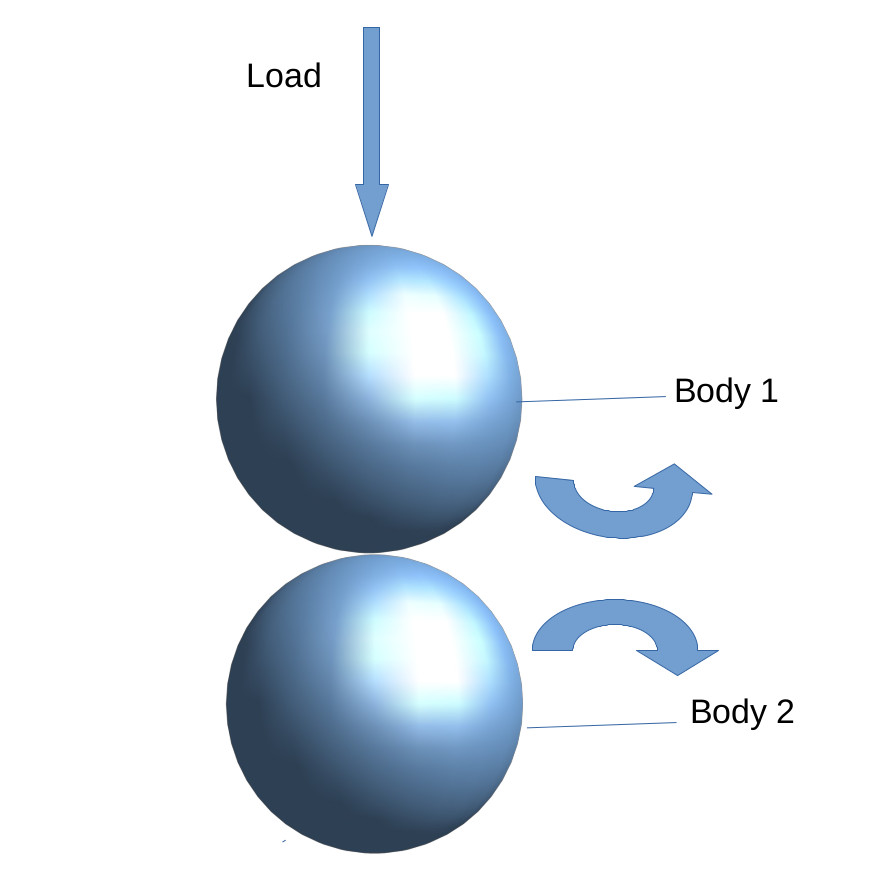}
\caption{Point contact schematic diagram}
\label{fig:undefm}
\end{figure}
\begin{align}\label{eq:1}
 \frac{\partial }{\partial x} \Big(\epsilon^{*} \frac {\partial u}{\partial x}\Big)
 \le \frac {\partial (\rho h)}{\partial x}
\end{align}
\begin{align}\label{eq:2}
 u\ge 0
\end{align}
\begin{align}\label{eq:3}
 u.\Big[\frac{\partial }{\partial x} \Big(\epsilon^{*} \frac {\partial u}{\partial x}\Big)
 -\frac {\partial (\rho h)}{\partial x}\Big] = 0,
\end{align}
 where 
  \begin{align*}
   \epsilon = \dfrac{\bar{\rho} h^{3}}{\bar{\eta} \lambda_{\text{line}}}.
  \end{align*}
  $u$ and $h$ are the dimensionless pressure and film thickness, $\bar{\rho}(u)$ and 
  $\bar{\eta}(u)$ are dimensionless density and viscosity, and $\lambda_{\text{line}}$ is a dimensionless speed 
  parameter:
  \begin{align*}
   \lambda_{\text{line}}= \dfrac{6\eta_{0}v_{s}R^{2}}{b^{3}p_{H}},
  \end{align*}
where $\eta_{0}=0.04$ (ambient pressure viscosity), $v_{s}=v_{1}+v_{2}$ (sum of velocity ), $p_{H}=\frac{Eb}{4R_{x}}$ (maximum Herizian pressure),
$R_{x}=0.02$ (reduced radius of curvature ) and $b=\frac{4.0R_{x}}{\sqrt{W/(2.0\pi)}}$ (half width Hertizian contact).
$G_{0}=3500$ (material parameter), $U=7.3\times 10^{-11}$ (dimensionless speed parameter), $W=1.3\times 10^{-4}$ (dimensionless load parameter),
$h_{00}=0.0000015042$, $\alpha=1.59 \times 10^{-8}$, $ E=G_{0}/\alpha$, $z=\dfrac{\alpha}{\Big(5.1\times 10^{-9}(\log \eta_{0} + 9.67)\Big)}$,
$\lambda=\frac{(12ER_{x}^{3}U)}{(b^{3}p_{H})}$, $U=\frac{(\eta_{0}u_{s})}{(2ER_{x})}$.\\
The nondimensionless viscosity $\bar{\eta}$ is defined according to 
\begin{align}
 \bar{\eta}(u) = e^{\Bigg\{ \Bigg( \dfrac{\alpha p_{0}}{z}  \Bigg) 
 \Bigg(-1+\Big(1+\dfrac{up_{H}}{p_{0}}\Big)^{z}   \Bigg)   \Bigg\} },
\end{align}
where $p_{0}=1.98 \times 10^{-8}$.
Dimensionless density $\bar{\rho}$ is given by
\begin{align}
 \bar{\rho}(u) = \dfrac{0.59 \times 10^{9} + 1.34 u p_{H}}{0.59 \times 10^{9} + u p_{H}}
\end{align} 
The nondimensionalized film thickness equation can be written as
  \begin{align}
 h(x) =h_{00}+\frac{{x}^2}{2}-\frac{1}{\pi}\int_{-\infty}^{\infty} u(x'){\ln|x-x'| }dx' 
\end{align}
where $h_{00}$ is a constant.\\
Dimensionless force balance equation is read as
 \begin{align}
 \int_{-\infty}^{\infty} u(x')dx' -\dfrac{\pi}{2}=0
\end{align}
Define Derichlet boundary condition by taking sufficently large bounded domain as
 \begin{align}\label{eq:4}
  u= 0 \quad \text{on} \quad \partial \Omega.
 \end{align}
The film thickness equation is in dimensionless form is written as follows
\small
\begin{equation}\label{eq:5}
h_{d}(x) = h_{0}+\frac{x^{2}}{2}-\frac{1}{\pi}\int_{\Omega}\log|x-x^{'}|u(x^{'})dx^{'},
\end{equation}
\normalsize
where $h_{0}$ is an integration constant.\\
The dimensionless force balance equation is defined as follows
\begin{align}\label{eq:6}
 \int_{-\infty}^{\infty}u(x') dx' = \frac{\pi}{2}
\end{align}
\subsubsection{Point contact model}
Let strongly nonlinear EHL model problem of a ball rolling in the positive $x$-direction gives rise to a variational inequality
defined below as
\begin{align}\label{eq:1}
 \frac{\partial }{\partial x} \Big(\epsilon^{*} \frac {\partial u}{\partial x}\Big)+
 \frac{\partial }{\partial y} \Big(\epsilon^{*} \frac {\partial u}{\partial y}\Big)
 \le \frac {\partial (\rho h)}{\partial x}
\end{align}
\begin{align}\label{eq:2}
 u\ge 0
\end{align}
\begin{align}\label{eq:3}
 u.\Big[\frac{\partial }{\partial x} \Big(\epsilon^{*} \frac {\partial u}{\partial x}\Big)+
 \frac{\partial }{\partial y} \Big(\epsilon^{*} \frac {\partial u}{\partial y}\Big)
 -\frac {\partial (\rho h)}{\partial x}\Big] = 0,
\end{align}
Here term $\epsilon$  is defined as
\begin{equation*}
 \epsilon = \frac{\rho \mathscr{H}^{3}}{\eta\lambda},
\end{equation*}
where $\rho$ is dimensionless density of lubrication, $\eta$ is dimensionless viscosity of lubrication and
speed parameter
\begin{align}
  \label{eqn9}
   \lambda= \dfrac{6\eta_{0}u_{s}R^{2}}{a^{3}p_{H}}.
  \end{align}
The non-dimensionless viscosity $\eta$ is defined according to 
\begin{align}
\label{eqn10}
 \eta(u) = \exp\Bigg\{ \Bigg( \dfrac{\alpha p_{0}}{z}  \Bigg) 
 \Bigg(-1+\Big(1+\dfrac{{u}p_{H}}{p_{0}}\Big)^{z}   \Bigg)   \Bigg\}.
\end{align}
Dimensionless density $\rho$ is given by
\begin{align}
\label{eqn11}
 \rho(u) = \dfrac{0.59 \times 10^{9} + 1.34 u p_{H}}{0.59 \times 10^{9} + u p_{H}}.
\end{align}
The term film thickness $\mathscr{H}$ of lubricant is written as follows
\begin{align}
\label{eqn12}
\mathscr{H}(x,y) = \mathscr{H}_{00}+\frac{x^{2}}{2}+\frac{y^{2}}{2} + 
\frac{2}{\pi^{2}}\int_{-\infty}^{\infty} \int_{-\infty}^{\infty}\frac{u(x^{'},y^{'})dx^{'}dy^{'}}{\sqrt{(x-x^{'})^2+(y-y^{'})^2}},
\end{align}
where $\mathscr{H}_{00}$ is an integration constant.\\
The dimensionless force balance equation is defined as follows
\begin{gather}
\label{eqn13}
 \int_{-\infty}^{\infty} \int_{-\infty}^{\infty}u(x',y') dx'dy' = \frac{3\pi}{2}
\end{gather}
Consider the ball is elastic whenever load is large enough. Then system \ref{eq:1}--\ref{eq:6} and \ref{eq:1}--\ref{eq:6} 
form line and point contact Elasto-hydrodynamic Lubrication model respectively.
Schematic diagrams of EHL model is given in \ref{fig:undefm} and \ref{fig:defm} in the form of undeformed and deformed contacting body structure respectively.\\
The remainder of the article is organized as follows. In section \ref{section:vi} variational inequality and its notation is established;
Furthermore, existence results are proved for our model problem; In section.~\ref{section:dgfvm} DG-FEM notation 
and the proposed method is demonstrated; In section.~\ref{section:error} Error estimates are proved in $L^2$ and $H^{1}$ norm;
In section.~\ref{section:ntest} numerical experiment and graphical results are provided;
At last section.~\ref{section:con} conclusion and future direction is mentioned.
\section{Variational Inequality}\label{section:vi}
We consider space $\mathscr{V} = H^{1}_{0}(\Omega)$ and its dual space as $\mathscr{V}^{*}= (H^{1}_{0}(\Omega))^{*} = H^{-1}(\Omega)$.
Also define notion $\langle.,.\rangle$ as duality pairing on $\mathscr{V}^{*} \times \mathscr{V} $.
Further assume that $\mathscr{C}$ is closed convex subset of $\mathscr{V}$ defined by 
\begin{align}\label{eq:11}
\mathscr{C} = \Big\{  v \in \mathscr{V}: v \ge 0  \text{ a.e. } \in \Omega \Big\}.
\end{align}
Additionally, we define the operator $\mathscr{T}$ as 
\begin{align}\label{eq:12}
 \mathscr{T}:u\rightarrow -\Big[\frac{\partial }{\partial x} \Big(\epsilon^{*} \frac {\partial u}{\partial x}\Big)+
 \frac{\partial }{\partial y} \Big(\epsilon^{*} \frac {\partial u}{\partial y}\Big)\Big]
+\frac {\partial (\rho h_{d})}{\partial x}
\end{align}
Then, for a given $f \in \mathscr{V}^{*}$, the problem of finding an element $u \in \mathscr{C}$ such that
\begin{align}\label{eq:14}
 \langle \mathscr{T}(u)-f,v-u \rangle \ge 0, \quad \forall v \in \mathscr{C}.
\end{align}
Throughout the article, we shall assume that there exists $ \epsilon_{1},M_{*} \in \mathbb{R}_{+}$ such that
\begin{align}
0 < \epsilon_{1} \le \epsilon(u) \le M_{*} \quad \forall \varsigma \in \Omega \quad \text{and} \quad u \in \mathbb{R}.
\end{align}
\begin{definition}
 Operator $\mathscr{T}: \mathscr{C} \subset \mathscr{V}\rightarrow \mathscr{V}^{*}$ is said to be pseudo-monotone if 
 $\mathscr{T}$ is a bounded operator and whenever $u_{k}\rightharpoonup u$ in $\mathscr{V}$ as $k \rightarrow \infty$
 and 
 \begin{align}\label{eq:b14}
 \lim_{k \rightarrow \infty}\sup\langle \mathscr{T}(u_{k}),u_{k}-u\rangle \le 0.
\end{align}
it follows that for all $v \in \mathscr{C}$
 \begin{align}\label{eq:15}
 \lim_{k \rightarrow \infty}\inf\langle \mathscr{T}(u_{k}),u-v\rangle 
 \ge \langle\mathscr{T}(u),u-v\rangle.
\end{align}
\end{definition}
\begin{definition}
Operator $ \mathscr{T}: \mathscr{V} \rightarrow \mathscr{V}^{*}$ is said to be hemi-continuous 
if and only if the function $\phi: t \longmapsto \langle \mathscr{T}(tx+(1-t)y),x-y\rangle $ is
continuous on $[0,1] \quad \forall x,y \in \mathscr{V}$.
\end{definition}
On this context the following existence theorem has been proved by Oden and Wu \cite{oden1985} by assuming constant density 
and constant viscosity of the lubricant.
However, idea is easily extend-able for more realistic operating condition in which density 
and viscosity of the lubricant are depend on its applied pressure see Appendix.~\ref{appendix:pvalue}.
A straight forward modification of the analysis of \cite{oden1985} yields the theorem below and so we will omit the proof.
\begin{theorem}\cite{oden1985}
Let $\mathscr{C}(\neq \emptyset)$ be a closed, convex subset of a reflexive Banach space $\mathscr{V}$ and let $\mathscr{T}: \mathscr{C} \subset \mathscr{V} \rightarrow \mathscr{V}^{*}$ be a pseudo-monotone,
bounded, and coercive operator from $\mathscr{C}$ into the dual $\mathscr{V}^{*}$ of $\mathscr{V}$, in the sense that there exists $y \in \mathscr{C}$ such that
 \begin{align}\label{eq:18}
 \text{lim}_{||x||\rightarrow \infty}\frac{\langle  \mathscr{T}(x),x-y\rangle}{||x||} = \infty.
 \end{align}
Let $f$ be given in $\mathscr{V}^{*}$ then there exists at least one $u \in \mathscr{C}$ such that 
 \begin{align}\label{eq:19}
  \langle \mathscr{T}(x)-f,y-x\rangle \ge 0 \quad \forall y \in \mathscr{C}.
 \end{align}
\end{theorem}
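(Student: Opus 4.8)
The plan is to prove existence by the classical Galerkin–Brouwer scheme for pseudo-monotone variational inequalities, using reflexivity of $\mathscr{V}$ for weak compactness and the pseudo-monotonicity of $\mathscr{T}$ to pass to the limit. First I would fix an increasing family $V_1 \subset V_2 \subset \cdots$ of finite-dimensional subspaces whose union is dense in $\mathscr{V}$ and which contains the coercivity witness $y$ of \eqref{eq:18}, and set $\mathscr{C}_n = \mathscr{C} \cap V_n$; each $\mathscr{C}_n$ is closed, convex, and nonempty since $y \in \mathscr{C}_n$. On each level I would solve the finite-dimensional problem of finding $u_n \in \mathscr{C}_n$ with $\langle \mathscr{T}(u_n) - f, v - u_n\rangle \geq 0$ for all $v \in \mathscr{C}_n$. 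Existence of $u_n$ is a finite-dimensional fact obtained from Brouwer's theorem in its Hartman–Stampacchia form, using that the restriction of $\mathscr{T}$ to $V_n$ is continuous (a consequence of boundedness together with the hemicontinuous/pseudo-monotone structure).

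Next I would extract uniform bounds from coercivity. Testing the discrete inequality with $v = y \in \mathscr{C}_n$ gives $\langle \mathscr{T}(u_n), u_n - y\rangle \leq \langle f, u_n - y\rangle \leq \|f\|_{\mathscr{V}^{*}}\,(\|u_n\| + \|y\|)$. Were $\|u_n\|$ unbounded along a subsequence, dividing by $\|u_n\|$ and letting $\|u_n\| \to \infty$ would force the quotient $\langle \mathscr{T}(u_n), u_n - y\rangle / \|u_n\|$ to $+\infty$ by \eqref{eq:18} while the right-hand quotient stays bounded, a contradiction; hence $\sup_n \|u_n\| < \infty$. Since $\mathscr{V}$ is reflexive and $\mathscr{C}$ is convex and closed, hence weakly closed, a subsequence satisfies $u_n \rightharpoonup u$ with $u \in \mathscr{C}$, and boundedness of $\mathscr{T}$ keeps $\{\mathscr{T}(u_n)\}$ bounded in $\mathscr{V}^{*}$.

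The decisive step is the limit passage. I would first check the trigger \eqref{eq:b14}, namely $\limsup_n \langle \mathscr{T}(u_n), u_n - u\rangle \leq 0$. Taking a recovery sequence $v_n \in \mathscr{C}_n$ with $v_n \to u$ strongly (available from density of $\bigcup_n \mathscr{C}_n$ in $\mathscr{C}$), the discrete inequality gives $\langle \mathscr{T}(u_n), u_n - v_n\rangle \leq \langle f, u_n - v_n\rangle$, and splitting $\langle \mathscr{T}(u_n), u_n - u\rangle = \langle \mathscr{T}(u_n), u_n - v_n\rangle + \langle \mathscr{T}(u_n), v_n - u\rangle$ together with $\langle f, u_n - v_n\rangle \to 0$ and $\langle \mathscr{T}(u_n), v_n - u\rangle \to 0$ (bounded functionals against a strongly null sequence) yields the limsup bound. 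Pseudo-monotonicity \eqref{eq:15} then gives, for every $v \in \mathscr{C}$, $\liminf_n \langle \mathscr{T}(u_n), u_n - v\rangle \geq \langle \mathscr{T}(u), u - v\rangle$. On the other hand, for $v$ in a fixed $\mathscr{C}_m$ the discrete inequality for all $n \geq m$ gives $\langle \mathscr{T}(u_n), u_n - v\rangle \leq \langle f, u_n - v\rangle \to \langle f, u - v\rangle$, so $\liminf_n \langle \mathscr{T}(u_n), u_n - v\rangle \leq \langle f, u - v\rangle$. Combining the two estimates produces $\langle \mathscr{T}(u) - f, v - u\rangle \geq 0$ for all $v \in \bigcup_n \mathscr{C}_n$, and then for all $v \in \mathscr{C}$ by strongly approximating an arbitrary $v$ and using that $\mathscr{T}(u) - f$ is a fixed element of $\mathscr{V}^{*}$; this is exactly \eqref{eq:19}.

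I expect the main obstacle to be this final limit passage, where three ingredients must be coordinated: constructing the strong recovery sequence $v_n \to u$ inside the constraint sets $\mathscr{C}_n$ (the internal-approximation property of $\mathscr{C}$), verifying the one-sided limsup condition that activates \eqref{eq:15}, and closing the density argument so the inequality holds over all of $\mathscr{C}$ rather than only on the Galerkin subspaces. By comparison, the coercivity-driven a priori estimate and the finite-dimensional Brouwer step are routine, the whole subtlety residing in the interplay between the weak convergence of $u_n$, the strong convergence of the test functions, and the asymmetric liminf/limsup structure of pseudo-monotonicity.
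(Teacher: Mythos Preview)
The paper does not supply a proof of this theorem: it is quoted from Oden--Wu \cite{oden1985} as a known abstract existence result for pseudo-monotone variational inequalities, and the author explicitly writes that the proof is omitted. There is therefore nothing in the paper to compare against beyond the citation.

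Your Galerkin--Brouwer scheme is exactly the classical route to this result (Br\'ezis, Lions, Kinderlehrer--Stampacchia), and the outline is sound: finite-dimensional solvability via Hartman--Stampacchia, a priori bounds from coercivity against the fixed witness $y$, weak compactness from reflexivity, and the limsup/liminf bookkeeping that activates pseudo-monotonicity. Two small points to tighten. First, Hartman--Stampacchia as usually stated needs a compact convex set, so on each level one should intersect $\mathscr{C}_n$ with a large ball, solve there, and use the same coercivity argument you give later to show the solution is interior to the ball and hence solves the VI on all of $\mathscr{C}_n$. Second, you correctly flag the internal-approximation property (density of $\bigcup_n \mathscr{C}_n$ in $\mathscr{C}$) as the only genuinely delicate ingredient; for a general closed convex $\mathscr{C}$ this is not automatic from density of $\bigcup_n V_n$ in $\mathscr{V}$, and the standard remedy is either to assume separability and build the $V_n$ from a countable dense subset of $\mathscr{C}$ itself, or to run the argument over the net of finite subsets of $\mathscr{C}$ and their convex hulls rather than over linear Galerkin subspaces. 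With either fix the rest of your argument goes through unchanged.
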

In the next section, we will give a complete formulation as well as will give theoretical justification for existence of 
our model problem in discrete computed setting.
\section{Discrete Formulation of DG-FEM}\label{section:dgfvm}
\begin{figure}
\centering
\includegraphics[width=2.0in, height=2.0in, angle=0]{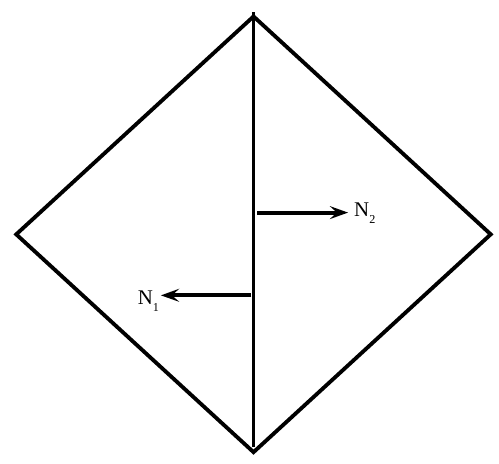}
\caption{Adjacent Element $K_{1}$ and $K_{2}$}
\label{fig:elm}
\end{figure}
Let $\mathcal{P}_{h}=\cup_{i \in \mathcal{J}_{h}}\{K_{i}\}$ is a discontinuous finite element partition of domain $\Omega$, where $\mathcal{J}_{h}:=\{i; 1\le i \le N_{h}\}$.
We define $\mathbb{P}_{p_{i}}(\overline{K})$ as the space of polynomials of total degree less than or equal $p_{i}$ on the master rectriangle $\overline{K}=[-1 ,1]\times [-1, 1]$.
Let $\overline{\mathcal{S}}_{p_{i}}(\overline{K})$ denote $\mathbb{P}_{p_{i}}(\overline{K})$ whenever $\overline{K}$ is a master rectriangle.
\begin{equation}
\overline{\mathcal{S}}^{p}_{h} (K_{i})= \{ v; v= \overline{v} \circ {\mathcal{M}^{-1}_{v}}, \overline{v} \in \overline{\mathcal{S}}_{p_{i}}(\overline{K})\}.
\end{equation}
We define finite dimensional discontinuous space as
\begin{equation}\label{eq:26}
\mathcal{D}^{p}_h = \{ v \in L^{2}(\Omega): v|_{K_{i}} \in \overline{\mathcal{S}}_{p_{i}}(K_{i}), v|_{\partial \Omega}=0 \quad \forall K_{i} \in
\mathcal{P}_{h} \},
\end{equation}
where $p= \min\{p_{i} \ge 1; 1 \le i \le N_{h}\}$.\\
Let $e_{k}$ be an interior edge shared by two elements $K_i$ and $K_j$ in $\mathcal{P}_h$ and let $\bold{N}_{i} $ 
and $\bold{N}_{j}$ be unit normal vectors on $e_{k}$ pointing exterior to $K_i$ and $K_j$ respectively.
We define average \{.\} and jump [.] on $e_{k}$ for scalar $q$ and vector $w$, respectively, as (\cite{arnold})
\[\{q\} = \frac{1}{2}(q|_{\partial K_i}+ q|_{\partial K_2}), \quad 
[q] = (q|_{\partial K_i}\bold{N}_{i}+ q|_{\partial K_j}\bold{N}_{j})\]
\[\{w\} = \frac{1}{2}(w|_{\partial K_i}+ w|_{\partial K_j}), \quad 
[w] = (w|_{\partial K_i}\bold{N}_{i}+ w|_{\partial K_j}\bold{N}_{j}).\]
If $e_{k}$ is a edge on the boundary of $\Omega$, we define ${q} = q,\quad [w] = w.\bold{N}$.
Let $\Gamma$ denote the union of the boundaries of the triangle $K$ of $\mathcal{P}_h$ and $\Gamma_{0}:= \Gamma\diagdown\partial \Omega$.
We define 
\begin{align}
 H^{s}(\Omega,\mathcal{P}_{h}):=\{ v \in L^{2}(\Omega): v|_{K_{i}} \in H^{s}(K_{i}), \quad \forall K_{i} \in \mathcal{P}_{h} \}.
\end{align}
Let $v \in H^{2}(\Omega,\mathcal{P}_{h})$, then we define the following mesh dependent norm $\vertiii{.}$ and $\vertiii{.}_{\nu}$ as
\begin{align}\label{eq:36}
\vertiii{v}^{2}= \sum_{i=1}^{N_{h}}\int_{K_{i}}|\nabla v|^{2}dx
+\sum_{e_{k} \in \Gamma} a_{k} \frac{p_{k}^{2}}{|e_{k}|^{\beta}} \int_{e_{k}}[v]^{2}\\
\vertiii{v}^{2}_{\nu}= \sum_{i=1}^{N_{h}}\int_{K_{i}}|\nabla v|^{2}dx
+\sum_{e_{k}}\frac{|e_{k}|^{\beta}}{p_{k}^{2}}\int_{e}\Big\{ \frac{\partial v }{\partial \nu}\Big\}^{2} ds 
+\sum_{e_{k} \in \Gamma} a_{k} \frac{p_{k}^{2}}{|e_{k}|^{\beta}} \int_{e_{k}}[v]^{2}.
\end{align}
\subsection{Exterior penalty solution approximation}
In this section, we introduce an exterior penalty term to regularize the inequality constraint \ref{eq:1}--\ref{eq:6}.
We define a exterior penalty operator ${\xi}:H_{0}^{1}(\Omega) \rightarrow H^{-1}$ as 
\begin{align}\label{eq:48}
 {\xi}(u) = u^{-},
\end{align}
where $u^{-} = u-\max(u,0)=\dfrac{u-|u|}{2}$.
Let us define exterior penalty problem,
$(\mathscr{U}_{\epsilon_{p}})$: for $\epsilon_{p} > 0,\quad \text{find}\quad u_{\epsilon_{p}} \in H^{1}_{0}(\Omega)$ such that
\begin{align}\label{eq:49}
 \langle \mathscr{T}(u_{\epsilon_{p}}), v \rangle + \langle {\xi}(u_{\epsilon_{p}}), 
 v \rangle/\epsilon_{p} = \langle f, v \rangle  \quad \forall v \in H^{1}_{0}(\Omega),
\end{align}
where ${\varepsilon}$ is an arbitrary small positive number (${\varepsilon} = 1.0 \times 10^{-6}$).
\begin{lemma}
Penalty operator $\xi: \mathscr{V}\longmapsto \mathscr{V^{*}}$ is monotone, coercive and bounded.
\end{lemma}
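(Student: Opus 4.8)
The plan is to verify the three properties one at a time, each reducing to an elementary pointwise fact about the scalar map $\phi(t) = t^{-} = \tfrac{1}{2}(t-|t|) = \min(t,0)$, combined with the continuous embedding $\mathscr{V}=H^{1}_{0}(\Omega)\hookrightarrow L^{2}(\Omega)$. Throughout I read the duality pairing concretely as $\langle \xi(u),v\rangle = \int_{\Omega} u^{-}v\,dx$.

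First, for monotonicity I would note that $\phi(t)=\min(t,0)$ is nondecreasing on $\mathbb{R}$, so $(\phi(a)-\phi(b))(a-b)\ge 0$ for all $a,b\in\mathbb{R}$. Applying this pointwise with $a=u(x)$, $b=w(x)$ and integrating over $\Omega$ yields
\begin{align*}
\langle \xi(u)-\xi(w),\,u-w\rangle = \int_{\Omega}(u^{-}-w^{-})(u-w)\,dx \ge 0,
\end{align*}
which is precisely monotonicity. For boundedness the key pointwise bound is $|u^{-}(x)|\le |u(x)|$, so that $\|\xi(u)\|_{L^{2}(\Omega)}=\|u^{-}\|_{L^{2}(\Omega)}\le \|u\|_{L^{2}(\Omega)}$. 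Testing against $v\in\mathscr{V}$ and using Cauchy--Schwarz followed by the Poincar\'e inequality $\|v\|_{L^{2}(\Omega)}\le C\|v\|_{\mathscr{V}}$ gives
\begin{align*}
|\langle \xi(u),v\rangle| = \Big|\int_{\Omega} u^{-}v\,dx\Big| \le \|u^{-}\|_{L^{2}}\|v\|_{L^{2}} \le C\,\|u\|_{\mathscr{V}}\,\|v\|_{\mathscr{V}},
\end{align*}
hence $\|\xi(u)\|_{\mathscr{V}^{*}}\le C\|u\|_{\mathscr{V}}$ and $\xi$ maps bounded sets to bounded sets.

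Finally, for coercivity I would evaluate the diagonal pairing directly. Since $u^{-}u=0$ on $\{u\ge 0\}$ and $u^{-}u=u^{2}$ on $\{u<0\}$,
\begin{align*}
\langle \xi(u),u\rangle = \int_{\Omega} u^{-}u\,dx = \int_{\{u<0\}} u^{2}\,dx = \|u^{-}\|_{L^{2}(\Omega)}^{2}\ge 0.
\end{align*}

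The delicate point, and the step I expect to be the genuine obstacle, is that this quantity vanishes identically on the constraint set $\mathscr{C}$, so $\xi$ cannot be coercive in the strict sense $\langle \xi(u),u\rangle/\|u\|_{\mathscr{V}}\to\infty$ as $\|u\|_{\mathscr{V}}\to\infty$. The property actually at work here, and all that is needed downstream, is the nonnegativity $\langle \xi(u),u\rangle = \|u^{-}\|_{L^{2}}^{2}\ge 0$: when the penalty contribution $\tfrac{1}{\epsilon_{p}}\langle \xi(u),u\rangle$ is added to the coercive principal operator $\mathscr{T}$ in the penalized problem $(\mathscr{U}_{\epsilon_{p}})$, it never degrades the coercivity of the sum, which is exactly what permits the application of the existence theorem. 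I would therefore state the ``coercive'' conclusion as this positivity property and use it in that role, rather than attempting to establish genuine coercivity of $\xi$ in isolation.
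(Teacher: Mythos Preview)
The paper states this lemma without proof, so there is no argument to compare against; your proposal supplies what the paper omits. Your treatments of monotonicity and boundedness are correct and standard: the pointwise monotonicity of $t\mapsto\min(t,0)$ gives the first, and $|u^{-}|\le|u|$ together with the embedding $H^{1}_{0}(\Omega)\hookrightarrow L^{2}(\Omega)$ gives the second.

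Your assessment of coercivity is also correct, and in fact sharper than the paper's claim. The operator $\xi$ is \emph{not} coercive on $\mathscr{V}=H^{1}_{0}(\Omega)$ in the usual sense $\langle\xi(u),u\rangle/\|u\|_{\mathscr{V}}\to\infty$: as you observe, $\langle\xi(u),u\rangle=\|u^{-}\|_{L^{2}}^{2}$ vanishes on all nonnegative functions, so the lemma as literally stated overreaches. What survives, and what the subsequent analysis actually uses, is exactly the nonnegativity $\langle\xi(u),u\rangle\ge 0$, so that adding $\epsilon_{p}^{-1}\xi$ to the coercive principal part $\mathscr{T}$ preserves coercivity of the sum. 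Your reformulation is the right way to state and use the property.
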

Now from regularity theory (see reference []), it is easy to show that there exists a unique solution $u_{\epsilon_{p}} \in H^{2}(\Omega)$ such that 
\begin{align}
|\vert u_{\epsilon_{p}}  |\vert_{H^{2}(\Omega)} \le C|\vert f_{\epsilon_{p}} |\vert_{0},
\end{align}
where $f_{\epsilon_{p}}=f- {\xi/\epsilon_{p}}(u_{\epsilon_{p}})$.
\subsection{Weak Formulation}
Reconsider the problem of the type 
\begin{align}
-\frac{\partial }{\partial x} \Big(\epsilon^{*}(u) \frac {\partial u}{\partial x}\Big)-
 \frac{\partial }{\partial y} \Big(\epsilon^{*}(u) \frac {\partial u}{\partial y}\Big)
 +\frac {\partial (\rho h_{d})}{\partial x}+\frac{1}{\epsilon_{p}}{\xi}(u)=0 \quad \text{in } \Omega\\
 u =0 \quad \text{on } \partial \Omega,
\end{align}
where all notation has their usual meaning.\\
For given $u,v \in H^{2}(\Omega, \mathcal{P}_{h})$ and for fixed value of $\Phi, h_{d} \in H^{2}(\Omega, \mathcal{P}_{h})$, define bilinear form as
\begin{align}\label{eq:33}
 \langle \mathscr{T}(\Phi;u),v\rangle 
 = \sum \int_{K_{i}} \epsilon^{*}(\Phi) \nabla u.\nabla v ds +\sum \int_{K_{i}} \frac{1}{\epsilon_{p}}{\xi}(\Phi) v ds \nonumber \\
- \sum\limits_{e_{k} \in \Gamma}\int_{e_{k}} [v]\{ \epsilon^{*}(\Phi) \nabla u.\bold{n}\}ds 
+\sum_{e_{k} \in \Gamma} a_{k} \frac{p_{k}^{2}}{|e_{k}|^{\beta}} \int_{e_{k}}[u][v] ds \nonumber \\
-\sum\limits_{K \in \mathscr{R}_h} \int_{K_{i}}
(\rho(\Phi) h_{d}(x)).(\beta .\bold{n})\nabla v ds 
+\sum\limits_{e_{k} \in \Gamma} \int_{e_{k}} [v] \{ (\rho(\Phi) h_{d}(x)).(\beta.\bold{n})\} ds.
\end{align}
Now we will state few lemmas and inequalities without proof which will be later helpful in our subsequent analysis.
\begin{lemma}[\bf Interpolation Error Estimates]
For $u \in H^{s}(K_{i})$, there exist a positive constant $C_{A}$ and an interpolation value $u_{I} \in \mathscr{V}_{h}$, such that
\begin{align}
 |\vert u-u_{I} \vert|_{s,K} \le C_{A}h^{2-s}|u|_{2,K},\quad s=0,1.
\end{align}
\end{lemma}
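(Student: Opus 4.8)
The estimate is the classical local polynomial interpolation bound, and the natural route is a scaling (reference element) argument combined with the Bramble--Hilbert lemma. Note first that although the statement writes $u \in H^{s}(K_i)$, the right-hand side involves the second-order seminorm $|u|_{2,K}$, so the relevant hypothesis is $u \in H^{2}(K_i)$ and the content is that the $H^{s}(K_i)$ norm of the interpolation error decays like $h^{2-s}$. The plan is to transport the problem to the master rectangle $\overline{K}=[-1,1]\times[-1,1]$ via the mapping $\mathcal{M}_{v}$ introduced above, establish the estimate there by Bramble--Hilbert, and then scale back to $K_i$, carefully tracking the powers of $h$.

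First I would set $\hat{u}=u\circ \mathcal{M}_{v}$ and let $\hat{u}_{I}$ denote the interpolant of $\hat{u}$ on $\overline{K}$ into $\mathbb{P}_{p_i}(\overline{K})$. Since interpolation commutes with the change of variables, $u_{I}=\hat{u}_{I}\circ \mathcal{M}_{v}^{-1}$, so that $\widehat{u-u_{I}}=\hat{u}-\hat{u}_{I}$. Writing $\mathcal{M}_{v}(\hat{x})=B\hat{x}+b$, the standard change-of-variable estimates for Sobolev seminorms give, for $0\le j\le 2$,
\begin{align*}
|\hat{v}|_{j,\overline{K}} \le C\,\|B\|^{j}\,|\det B|^{-1/2}\,|v|_{j,K_i}, \qquad
|v|_{j,K_i} \le C\,\|B^{-1}\|^{j}\,|\det B|^{1/2}\,|\hat{v}|_{j,\overline{K}}.
\end{align*}
Under shape-regularity of the partition $\mathcal{P}_{h}$ one has $\|B\|\le C h$, $\|B^{-1}\|\le C h^{-1}$ and $|\det B|\simeq h^{2}$, and these are the only mesh-dependent quantities entering the final constant.

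The substantive step is the reference-element bound. Because $p_i\ge 1$, the interpolation operator reproduces every polynomial of degree $\le 1$, i.e. $\hat{q}_{I}=\hat{q}$ for all $\hat{q}\in\mathbb{P}_{1}(\overline{K})$. Hence for arbitrary $\hat{q}\in\mathbb{P}_{1}(\overline{K})$ one has $\hat{u}-\hat{u}_{I}=(\hat{u}-\hat{q})-(\hat{u}-\hat{q})_{I}$, and using the $H^{s}(\overline{K})$-stability of the interpolation operator (valid because $H^{2}(\overline{K})\hookrightarrow C(\overline{K})$ in two dimensions, so the point values defining the interpolant are controlled), we obtain $\|\hat{u}-\hat{u}_{I}\|_{s,\overline{K}}\le C\,\|\hat{u}-\hat{q}\|_{2,\overline{K}}$. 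Taking the infimum over $\hat{q}\in\mathbb{P}_{1}(\overline{K})$ and invoking the Bramble--Hilbert lemma yields
\begin{align*}
\|\hat{u}-\hat{u}_{I}\|_{s,\overline{K}} \le C\,|\hat{u}|_{2,\overline{K}}, \qquad s=0,1.
\end{align*}

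Finally I would chain the three ingredients seminorm by seminorm. For each $0\le j\le s$, scaling $|u-u_{I}|_{j,K_i}$ to $\overline{K}$, applying the reference bound (with $|\hat{u}-\hat{u}_{I}|_{j,\overline{K}}\le\|\hat{u}-\hat{u}_{I}\|_{s,\overline{K}}$), and scaling $|\hat{u}|_{2,\overline{K}}$ back to $K_i$ gives
\begin{align*}
|u-u_{I}|_{j,K_i} \le C\,\|B^{-1}\|^{j}\,\|B\|^{2}\,|u|_{2,K_i} \le C\,h^{2-j}\,|u|_{2,K_i}.
\end{align*}
Summing over $0\le j\le s$ and noting that for $h\le 1$ the lowest power $h^{2-s}$ dominates produces $\|u-u_{I}\|_{s,K_i}\le C_{A}\,h^{2-s}\,|u|_{2,K_i}$, as claimed. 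The only delicate point is keeping the powers of $h$ consistent between the full norm on the left and the pure seminorm on the right; this is exactly why the argument must be run seminorm-by-seminorm and why shape-regularity is needed to convert $\|B\|$ and $\|B^{-1}\|$ into clean powers of $h$. The Bramble--Hilbert step is the main ingredient, but it is entirely routine once the polynomial reproduction property of the interpolant is noted.
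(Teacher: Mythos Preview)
Your argument is the classical scaling-plus-Bramble--Hilbert proof and is correct. The only small caveat is that you write the reference map as affine, $\mathcal{M}_{v}(\hat x)=B\hat x+b$, whereas for a square master element $\overline{K}=[-1,1]^2$ mapped to general quadrilaterals the map is bilinear; the scaling estimates for $\|B\|$, $\|B^{-1}\|$, $|\det B|$ then have to be replaced by the corresponding bounds on the Jacobian, but under shape-regularity these still yield the same powers of $h$, so the conclusion is unaffected.

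As for comparison with the paper: there is nothing to compare. The paper explicitly introduces this lemma among results it will ``state \ldots\ without proof'' and gives no argument for it; it is quoted as a standard approximation-theory fact. Your proof supplies exactly the textbook derivation that the paper omits.
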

{\bf Trace inequality.} We state without proof the following trace inequality. Let $\phi \in H^{2}(K)$ and for an edge $e$ of $K$,
\begin{align}
 |\vert \phi |\vert_{e}^{2} \le C(h_{e}^{-1}|\phi|_{K}^{2}+h_{e}|\phi|_{1,K}^{2}).
\end{align}
Next lemma provides us a bound of film thickness term and later helpful in proving coercivity and error analysis.
\begin{lemma}
 For $h_d$ defined in equation \ref{eq:5}, $0 < \beta_{*} < 1, s= 2-\beta_{*}/(1-\beta_{*})>2$ there exist $C_{1} \text{ and } C_{2}>0$
 such that
 \begin{align}\label{eq:38}
  \max_{x,y \in \Omega}|h_{d}(u)| \le C_{1}+C_{2}\lVert u \rVert_{L^{s}} \quad 0 < \beta_{*} <1, \quad \forall (x,y) \in \bar{\Omega}.
 \end{align}
\end{lemma}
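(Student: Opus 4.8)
The plan is to bound $|h_d(x)|$ pointwise by isolating the three contributions in the definition \eqref{eq:5} and then controlling the nonlocal integral by Hölder's inequality. First I would apply the triangle inequality to write
\begin{align*}
 |h_d(x)| \le |h_0| + \frac{|x|^2}{2} + \frac{1}{\pi}\left| \int_{\Omega} \log|x-x'|\, u(x')\, dx' \right|.
\end{align*}
Since $\bar{\Omega}$ is a fixed bounded domain, the first two terms are dominated, uniformly in the evaluation point and independently of $u$, by the constant
\begin{align*}
 C_{1} := |h_0| + \tfrac{1}{2}\sup_{x \in \bar{\Omega}} |x|^{2} < \infty,
\end{align*}
so the whole burden of the estimate falls on the convolution term.

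Next I would estimate that nonlocal term. Writing $s' := s/(s-1)$ for the Hölder conjugate of $s$, an application of Hölder's inequality gives
\begin{align*}
 \left| \int_{\Omega} \log|x-x'|\, u(x')\, dx' \right| \le \big\| \log|x-\cdot| \big\|_{L^{s'}(\Omega)}\, \|u\|_{L^{s}(\Omega)}.
\end{align*}
It then remains to show that the kernel factor is finite and, crucially, bounded \emph{uniformly} in the evaluation point $x$, i.e. that
\begin{align*}
 \sup_{x \in \bar{\Omega}} \big\| \log|x-\cdot| \big\|_{L^{s'}(\Omega)} =: \pi\, C_{2} < \infty,
\end{align*}
after which the claimed bound follows by combining with the $C_{1}$ estimate and taking the maximum over $x \in \bar{\Omega}$.

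The hard part is precisely this uniform kernel bound, and it is here that the admissible range of $s$ enters. Because $\beta_{*} \in (0,1)$ forces $s > 2$, the conjugate exponent satisfies $s' \in (1,2)$, which keeps the kernel locally $L^{s'}$-integrable: the logarithmic (or, for the singular point-contact kernel, the weak algebraic) singularity is integrable to the power $s'$ over a bounded set, and for the singular case the threshold $s' < 2$ is exactly what the condition $s>2$ supplies. To pass from local integrability to a bound independent of $x$, I would enclose $\Omega$ in a fixed ball $B_R$, translate so that the singular point sits at the origin, and estimate the resulting integral by an explicit radial computation of $\int_{B_{2R}} \big|\log|z|\big|^{s'}\, dz$; this majorant does not depend on $x$ and is finite for every $s' < \infty$. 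Setting $C_{2}$ equal to $1/\pi$ times this supremum then completes the argument. The only delicacy I anticipate is confirming that the translation/covering step genuinely decouples the $x$-dependence from the integrability, so that a single constant $C_{2}$ serves simultaneously for all evaluation points in $\bar{\Omega}$.
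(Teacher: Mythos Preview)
The paper states this lemma without proof, so there is no written argument in the text to compare against directly; the bound is taken over from the Oden--Wu framework cited earlier in the paper. Your H\"older approach is correct and is in fact the natural one here. It is worth noting that the parametrisation $s=(2-\beta_{*})/(1-\beta_{*})$ in the statement is exactly the assertion that the conjugate exponent equals $s'=2-\beta_{*}$; thus the range $\beta_{*}\in(0,1)$ is precisely your condition $s'\in(1,2)$, which is the threshold for the point-contact kernel $|x-x'|^{-1}$ to lie in $L^{s'}$ over a bounded two-dimensional region (and is of course more than sufficient for the logarithmic line-contact kernel referenced by \eqref{eq:5}). Your translation-and-enlargement device, bounding $\int_{\Omega}|K(x-x')|^{s'}dx'$ by $\int_{B_{2R}}|K(z)|^{s'}dz$ after the change of variable $z=x'-x$, is the standard way to decouple the $x$-dependence and yields a single constant $C_{2}$ valid for all evaluation points, so the delicacy you flagged is not an obstacle.
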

\begin{lemma}
 The operator $ \mathscr{T}$ defined in equation \ref{eq:34} is bounded as a map from $\mathscr{V}$ into $\mathscr{V}^{*}$.
\end{lemma}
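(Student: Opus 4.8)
The plan is to exhibit a constant $C>0$, depending only on the uniform bounds $\epsilon_{1},M_{*}$ for $\epsilon^{*}$, the penalty parameters $a_{k},\beta,\epsilon_{p}$, and the mesh regularity, such that $|\langle \mathscr{T}(\Phi;u),v\rangle|\le C\,\vertiii{u}_{\nu}\,\vertiii{v}_{\nu}$ for all $u,v\in H^{2}(\Omega,\mathcal{P}_{h})$. Since the right-hand side is finite whenever $u$ ranges over a bounded set, this delivers boundedness of $\mathscr{T}$ as a map into $\mathscr{V}^{*}$. The strategy is to estimate each of the six terms of the bilinear form \eqref{eq:33} separately and then sum the resulting bounds.

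First I would dispatch the volume terms. For the diffusion contribution $\sum_{i}\int_{K_{i}}\epsilon^{*}(\Phi)\nabla u\cdot\nabla v$, the upper bound $\epsilon^{*}(\Phi)\le M_{*}$ together with the elementwise Cauchy--Schwarz inequality and a discrete Cauchy--Schwarz over the elements gives a bound by $M_{*}\vertiii{u}\,\vertiii{v}$. The zeroth-order penalty contribution $\epsilon_{p}^{-1}\sum_{i}\int_{K_{i}}\xi(\Phi)v$ is controlled via $|\xi(\Phi)|=|\Phi^{-}|\le|\Phi|$, Cauchy--Schwarz, and the Poincar\'e inequality, yielding a bound by $C\epsilon_{p}^{-1}\vertiii{\Phi}\,\vertiii{v}$. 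The interior-penalty jump term $\sum_{e_{k}}a_{k}p_{k}^{2}|e_{k}|^{-\beta}\int_{e_{k}}[u][v]$ is handled identically: Cauchy--Schwarz on each edge followed by a discrete Cauchy--Schwarz over the edges reproduces exactly the jump part of the two mesh norms, giving a bound by $\vertiii{u}\,\vertiii{v}$.

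The film-thickness terms are treated next. Using the uniform bound $\max_{\Omega}|h_{d}(u)|\le C_{1}+C_{2}\lVert u\rVert_{L^{s}}$ from the preceding lemma, the boundedness of the density $\rho$, and the fixed convective weight $\beta\cdot\mathbf{n}$, the volume integral is estimated by Cauchy--Schwarz and the boundary integral by the stated trace inequality, which converts the edge integrals of $v$ into elementwise $L^{2}$ and $H^{1}$ quantities that are controlled by $\vertiii{v}_{\nu}$; the factor $C_{1}+C_{2}\lVert u\rVert_{L^{s}}$ is then absorbed using the Sobolev embedding $\mathscr{V}\hookrightarrow L^{s}$ (valid for the range $s>2$ fixed in that lemma).

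The main obstacle is the consistency term $-\sum_{e_{k}}\int_{e_{k}}[v]\{\epsilon^{*}(\Phi)\nabla u\cdot\mathbf{n}\}$, since the average of the normal flux is not controlled by $\vertiii{\cdot}$ alone. Here I would insert the penalty weight by writing the integrand as the product of $\big(p_{k}|e_{k}|^{-\beta/2}[v]\big)$ and $\big(|e_{k}|^{\beta/2}p_{k}^{-1}\{\epsilon^{*}(\Phi)\nabla u\cdot\mathbf{n}\}\big)$, applying Cauchy--Schwarz edgewise and then over the edges. The first factor is bounded by the jump part of $\vertiii{v}$, while the second is precisely the weighted normal-derivative term appearing in $\vertiii{u}_{\nu}$ (after using $\epsilon^{*}\le M_{*}$). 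It is exactly to accommodate this contribution that the augmented norm $\vertiii{\cdot}_{\nu}$ carries the weighted average of $\partial v/\partial\nu$; checking the correct balance of the powers of $p_{k}$ and $|e_{k}|^{\beta}$ is the delicate bookkeeping step. Collecting the six estimates and taking $C$ to be the sum of the individual constants completes the argument.
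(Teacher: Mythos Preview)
The paper does not prove this lemma: it is one of a block of results that the author explicitly introduces with ``we will state few lemmas and inequalities without proof which will be later helpful in our subsequent analysis.'' There is therefore no authorial argument to compare against, and your proposal is the natural DG continuity estimate one would supply in its place.

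Two points deserve tightening. First, the form in \eqref{eq:33} is affine rather than linear in $u$: the exterior-penalty term $\epsilon_p^{-1}\int_{K_i}\xi(\Phi)v$ and the two convective terms involving $\rho(\Phi)h_d$ depend on the frozen data $\Phi,h_d$ but not on $u$, so the clean inequality $|\langle\mathscr{T}(\Phi;u),v\rangle|\le C\vertiii{u}_\nu\vertiii{v}_\nu$ cannot hold as written. The correct shape is $|\langle\mathscr{T}(\Phi;u),v\rangle|\le C\vertiii{u}_\nu\vertiii{v}_\nu+C(\Phi,h_d)\vertiii{v}_\nu$, and the nonlinear boundedness (bounded sets to bounded sets) then follows by taking $\Phi=u$ and invoking the $h_d$-bound lemma and the Sobolev embedding you cite. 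Second, the step ``Cauchy--Schwarz and Poincar\'e'' you use for the zeroth-order term requires, in the broken space $H^2(\Omega,\mathcal{P}_h)$, the discrete Poincar\'e--Friedrichs inequality $\|v\|_{L^2(\Omega)}\le C\vertiii{v}$ (which genuinely needs the jump contribution in $\vertiii{\cdot}$), not the classical Poincar\'e inequality on $H^1_0$; this is standard in the DG literature but should be named explicitly. With these two adjustments your term-by-term estimate is correct, and the weight-splitting you use on the consistency term is exactly why the augmented norm $\vertiii{\cdot}_\nu$ is introduced in the paper.
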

\begin{lemma}
 The operator $\mathscr{T}$, defined in equation (21) is hemi-continuous, that is $\forall u, v,w \in \mathscr{V}$,
 \[ \lim_{t \rightarrow 0^{+}}\langle \mathscr{T}(u+tv), w \rangle=\langle \mathscr{T}(u), w \rangle.\]
\end{lemma}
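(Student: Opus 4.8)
The plan is to reduce the claim to the continuity in $t$ of each of the six contributions appearing in the bilinear form \eqref{eq:33} when its coefficient argument is set to $\Phi=u+tv$. Writing $g(t):=\langle \mathscr{T}(u+tv),w\rangle$, it suffices to prove that $g$ is continuous at $t=0^{+}$; the same estimates give continuity throughout $[0,1]$ and hence the hemi-continuity in the sense of the definition. Accordingly I would split $g(t)$ into the volumetric diffusion term $\sum_i\int_{K_i}\epsilon^{*}(u+tv)\nabla(u+tv)\cdot\nabla w$, the penalty term $\sum_i\int_{K_i}\tfrac{1}{\epsilon_p}\xi(u+tv)\,w$, the two edge terms carrying $\{\epsilon^{*}(u+tv)\nabla(u+tv)\cdot\mathbf{n}\}$ and $[u+tv][w]$, and the two convective terms carrying $\rho(u+tv)h_d$, and treat each separately.

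The engine of the argument is a short list of elementary convergence facts. As $t\to 0^{+}$ one has $u+tv\to u$ in $H^{2}(\Omega,\mathcal{P}_h)$; hence $\nabla(u+tv)=\nabla u+t\nabla v\to\nabla u$ in $L^{2}(K_i)$, the traces and normal-derivative traces converge in $L^{2}(\Gamma)$, and, along a subsequence, $u+tv\to u$ pointwise a.e. in $\Omega$ and a.e. on $\Gamma$. The coefficient maps $\epsilon^{*}(\cdot)$ and $\rho(\cdot)$ are continuous and, by the standing hypothesis $0<\epsilon_1\le\epsilon^{*}\le M_{*}$ together with the boundedness of $\rho$, uniformly bounded, while the penalty nonlinearity $\xi(s)=s^{-}=(s-|s|)/2$ is globally Lipschitz. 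Consequently $\epsilon^{*}(u+tv)\to\epsilon^{*}(u)$ and $\rho(u+tv)\to\rho(u)$ pointwise a.e.\ with uniform bounds, and $\lVert\xi(u+tv)-\xi(u)\rVert_{L^{2}}\le t\lVert v\rVert_{L^{2}}$.

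With these ingredients each term is dispatched by Lebesgue's dominated convergence theorem. For the diffusion term I would write $\epsilon^{*}(u+tv)\nabla(u+tv)-\epsilon^{*}(u)\nabla u=\epsilon^{*}(u+tv)\,t\nabla v+[\epsilon^{*}(u+tv)-\epsilon^{*}(u)]\nabla u$; the first piece is $O(t)$ in $L^{2}$ via the upper bound $M_{*}$, and the second tends to $0$ in $L^{2}$ by dominated convergence, the integrand being bounded by $(2M_{*})^{2}|\nabla u|^{2}\in L^{1}$ and vanishing a.e. The penalty term converges by the Lipschitz estimate above, and the convective terms by continuity of $\rho$ together with the uniform film-thickness bound \eqref{eq:38}; moreover $h_d$ enters either as a fixed datum or affinely in $u$ through the logarithmic integral operator of \eqref{eq:5}, so it contributes no genuine nonlinearity in $t$. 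The stabilization edge term $\sum_{e_k}a_k\tfrac{p_k^{2}}{|e_k|^{\beta}}\int_{e_k}[u+tv][w]$ is affine in $t$, hence trivially continuous.

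I expect the only delicate point to be the two edge integrals carrying the nonlinear coefficients $\epsilon^{*}(u+tv)$ and $\rho(u+tv)$, since there one has merely $L^{2}$-convergence of the traces rather than convergence on all of $\Omega$. The remedy is to extract a subsequence along which the traces converge a.e.\ on $\Gamma$, combine this with the uniform bounds to apply dominated convergence on each $e_k$ (using the trace inequality to keep $\{\epsilon^{*}(u+tv)\nabla(u+tv)\cdot\mathbf{n}\}$ controlled by the $H^{2}$-norms), and then recover convergence of the full family by the standard subsequence principle. Summing the six limits yields $g(t)\to g(0)$ as $t\to 0^{+}$, which is precisely the asserted hemi-continuity.
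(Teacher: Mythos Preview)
The paper does not actually prove this lemma; it appears in the block of results the author explicitly introduces with ``Now we will state few lemmas and inequalities without proof which will be later helpful in our subsequent analysis,'' so there is no argument in the paper to compare against.

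On its own merits your proposal is the standard route and is essentially correct: split the form term by term, handle the affine-in-$t$ pieces (the stabilization edge term and the linear film-thickness contribution) trivially, the penalty $\xi$ by its global Lipschitz property, and the nonlinear coefficients $\epsilon^{*}$ and $\rho$ by dominated convergence using their uniform bounds. Two small remarks. First, the subsequence extraction is unnecessary: for a.e.\ $x$ one has $(u+tv)(x)=u(x)+tv(x)\to u(x)$ directly as $t\to 0^{+}$, so $\epsilon^{*}(u+tv)\to\epsilon^{*}(u)$ and $\rho(u+tv)\to\rho(u)$ pointwise a.e.\ without passing to any subsequence, and the ``subsequence principle'' step at the end can be dropped. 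Second, the lemma is stated for $u,v,w\in\mathscr{V}=H^{1}_{0}(\Omega)$, whereas the bilinear form \eqref{eq:33} you expand needs $H^{2}(\Omega,\mathcal{P}_{h})$ regularity for the normal-derivative edge traces $\{\epsilon^{*}(\Phi)\nabla u\cdot\mathbf{n}\}$ to make sense; your argument tacitly works in the latter space, which is how the form is actually used later in the paper, but you should make that assumption explicit rather than silently importing it.
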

\begin{lemma}
 The operator defined on equation (21) is coercive i.e. there is a constant $C$ independent of $h$ such that for $\alpha_{1}$ large enough and $h$ is small enough 
 \begin{align}\label{eq:47}
 \langle  \mathscr{T}(u;u_{h}),u_{h} \rangle \ge C \vertiii{u_{h}}^{2} \quad \forall u_{h} \in \mathscr{V}_{h}
 \end{align}
\end{lemma}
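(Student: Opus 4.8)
The plan is to test the form \eqref{eq:33} against itself, i.e.\ to set the frozen argument $\Phi=u$ and take $v=u_{h}$, and then to match the resulting terms against the two contributions that build the norm $\vertiii{u_{h}}^{2}$ in \eqref{eq:36}, namely the broken gradient energy $\sum_{i}\int_{K_{i}}|\nabla u_{h}|^{2}$ and the weighted jump penalty $\sum_{e_{k}\in\Gamma}a_{k}\frac{p_{k}^{2}}{|e_{k}|^{\beta}}\int_{e_{k}}[u_{h}]^{2}$. Among the six terms of $\langle\mathscr{T}(u;u_{h}),u_{h}\rangle$, the volume diffusion term and the jump penalty are quadratic in $u_{h}$ and carry the favourable sign, the interior--flux (consistency) term is quadratic but indefinite, and the three remaining contributions, the zeroth-order penalty $\tfrac{1}{\epsilon_{p}}\xi(u)u_{h}$ together with the two film-thickness terms, are linear in $u_{h}$ and are to be treated as lower-order perturbations.

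First I would use the uniform lower bound $\epsilon_{1}\le\epsilon$ to estimate the diffusion term from below by $\epsilon_{1}\sum_{i}\int_{K_{i}}|\nabla u_{h}|^{2}$, and observe that the penalty term $\sum_{e_{k}\in\Gamma}a_{k}\frac{p_{k}^{2}}{|e_{k}|^{\beta}}\int_{e_{k}}[u_{h}]^{2}$ is already nonnegative and coincides \emph{exactly} with the jump part of $\vertiii{u_{h}}^{2}$. These two terms therefore supply the full target norm, and the whole argument reduces to showing that the remaining terms do not overwhelm this positivity.

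The main obstacle is the interior--flux term $-\sum_{e_{k}\in\Gamma}\int_{e_{k}}[u_{h}]\{\epsilon^{*}(u)\nabla u_{h}\cdot\mathbf{n}\}$. I would bound it by Cauchy--Schwarz, pairing the jump $[u_{h}]$ against the normal flux with the complementary weights $\frac{p_{k}^{2}}{|e_{k}|^{\beta}}$ and $\frac{|e_{k}|^{\beta}}{p_{k}^{2}}$ that appear in $\vertiii{\cdot}_{\nu}$. The flux factor $\left(\sum_{e_{k}}\frac{|e_{k}|^{\beta}}{p_{k}^{2}}\int_{e_{k}}\{\epsilon^{*}(u)\nabla u_{h}\cdot\mathbf{n}\}^{2}\right)^{1/2}$ is then controlled, using $\epsilon\le M_{*}$ together with the stated trace inequality and the standard polynomial inverse estimate, by $C\lVert\nabla u_{h}\rVert_{L^{2}(\Omega)}$, the constant carrying a power of $p$, which is the origin of the suboptimality in $p$. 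A Young inequality with parameter $\delta$ splits this into $\delta\sum_{i}\int_{K_{i}}|\nabla u_{h}|^{2}$ plus $\frac{C}{\delta}\sum_{e_{k}}\frac{p_{k}^{2}}{|e_{k}|^{\beta}}\int_{e_{k}}[u_{h}]^{2}$. Choosing $\delta=\epsilon_{1}/4$ retains a positive share of the gradient energy, and then demanding the penalty weights $a_{k}$ (controlled by $\alpha_{1}$) to be large enough, exceeding $4C/\epsilon_{1}$ plus the margin needed in the next step, keeps the coefficient of the jump term strictly positive. Balancing $\delta$ against $\alpha_{1}$ is the delicate point of the proof.

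Finally I would absorb the three lower-order terms. Since $\xi(u)=u^{-}$ and $\rho$ are bounded, and the film thickness obeys the bound \eqref{eq:38}, each of $\tfrac{1}{\epsilon_{p}}\int\xi(u)u_{h}$, the volume film term and the edge film term is estimated by Cauchy--Schwarz and then by Young's inequality against a small fraction of $\sum_{i}\int_{K_{i}}|\nabla u_{h}|^{2}$ and of the jump seminorm; because these contributions are of lower order relative to the quadratic energy, taking $h$ small enough in the light-load regime renders them negligible against the reserves of gradient energy and penalty left over from the previous step. Collecting the surviving positive fractions yields $\langle\mathscr{T}(u;u_{h}),u_{h}\rangle\ge C\vertiii{u_{h}}^{2}$ with $C=C(\epsilon_{1},M_{*})$ independent of $h$, which is \eqref{eq:47}. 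The crux throughout is the trace/inverse estimate of the interior--flux term and the consequent lower bound forced on the penalty parameter $\alpha_{1}$.
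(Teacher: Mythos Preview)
The paper states this lemma but gives no proof of it; the text proceeds directly to the linearisation subsection. So there is no argument in the paper to compare against, and I can only assess your outline on its own merits.

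Your treatment of the quadratic part is the standard and correct route: the uniform lower bound $\epsilon\ge\epsilon_{1}$ delivers the broken gradient energy, the jump penalty already matches the second piece of $\vertiii{\cdot}^{2}$, and the indefinite consistency term $-\sum_{e_{k}}\int_{e_{k}}[u_{h}]\{\epsilon^{*}(u)\nabla u_{h}\!\cdot\!\mathbf{n}\}$ is controlled by Cauchy--Schwarz with complementary weights, the trace/inverse inequalities, and then Young with parameter $\delta$, the residual jump contribution being absorbed by choosing $\alpha_{1}$ large. That is exactly how DG coercivity is obtained.

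There is, however, a genuine gap in your final paragraph. In the form \eqref{eq:33} the three ``lower--order'' contributions --- the exterior penalty $\tfrac{1}{\epsilon_{p}}\int\xi(u)\,u_{h}$, the volume film term $-\int(\rho(u)h_{d})\,\vec\beta\!\cdot\!\nabla u_{h}$, and its edge analogue --- do not involve the trial argument at all: with the first slot frozen at $u$ they are \emph{linear} functionals of $u_{h}$, not quadratic. A linear functional cannot be absorbed into a quadratic one to yield a pure coercivity bound. Young's inequality gives only $|L(u_{h})|\le\delta\vertiii{u_{h}}^{2}+C_{\delta}$, hence
\[
\langle\mathscr{T}(u;u_{h}),u_{h}\rangle\ \ge\ (C-\delta)\vertiii{u_{h}}^{2}-C_{\delta},
\]
a G{\aa}rding-type estimate. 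The additive constant $C_{\delta}$ depends on $\lVert\xi(u)\rVert_{L^{2}}$ and $\lVert\rho(u)h_{d}\rVert$, which are fixed data; it does not tend to zero with $h$, so ``taking $h$ small enough'' cannot remove it, and the bound fails for $u_{h}$ near zero. The inequality \eqref{eq:47} as written can only refer to the \emph{bilinear} part of \eqref{eq:33} (diffusion, consistency and jump penalty terms), which is the usual DG coercivity statement; the affine film-thickness and exterior-penalty terms have to be carried separately, or the conclusion must be stated with an additive constant. You should either restrict the claim to the bilinear part, or note that for the exact nonnegative solution $\xi(u)=u^{-}=0$ and argue a G{\aa}rding inequality for the remaining convection data; do not claim the linear terms are swallowed by the quadratic energy.
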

\subsection{Linearizion}
\begin{align}
-\nabla (\epsilon^{*}(u)\nabla \psi+\epsilon_{u}^{*}(u) \nabla u \psi)+\nabla \Big(\vec{\beta}(\rho h_{d}
+(\rho h_{d})_{u}\psi)\Big)= \phi_{h} \text{ in } \Omega \\
\psi=0 \text{ on } \partial \Omega.
\end{align}
and $\psi$ satisfying the elliptic regularity as
\begin{align}
 |\vert \psi |\vert_{H^{2}(\Omega)} \le C|\vert \phi_{h} |\vert_{0}.
\end{align}
We seek $u_{h} \in \mathcal{D}^{p}_h(\mathcal{P}_{h})$ such that 
\begin{align}
 \mathscr{B}(u;u,v_{h})=\mathscr{B}(u_{h};u_{h},v_{h})
\end{align}
Now from Taylor's series expansion we get
\begin{align}
 \epsilon^{*}(w)=\epsilon^{*}(u) + \tilde{\epsilon}^{*}_{u}(w)(w-u),
\end{align}
where $\tilde{\epsilon}^{*}_{u}(w)=\int_{0}^{1} \epsilon^{*}_{u}(w+t[u-w]) dt$ \\
and
\begin{align}
 \epsilon^{*}(w)=\epsilon^{*}(u) + \epsilon ^{*}_{u}(u)(w-u)+\tilde{\epsilon}^{*}_{uu}(w)(w-u)^{2},
\end{align}
where $\tilde{\epsilon}^{*}_{uu}(w)=\int_{0}^{1}(1-t) \epsilon^{*}_{uu}(w+t[w-u]) dt$.
Consider the following bilinear form $\tilde{\mathscr{B}}$ as
\begin{align}
\tilde{\mathscr{B}}(\psi;w,v) =\mathscr{B}(\psi;w,v) + \sum_{i=1}^{N_{h}}\int_{K_{i}}(\epsilon_{u}(\psi)\nabla \psi)w \nabla v 
-\sum_{e_{k} \in \Gamma_{I}}\int_{e_{k}}\Big\{\epsilon_{u}(\psi)\frac{\partial \psi}{\partial \nu}w\Big\}[v] \nonumber \\
-\sum_{i=1}^{N_{h}}\int_{K_{i}}(\vec{\beta}.\bold{n})(\rho h_{d})_{u}\psi \nabla v 
+ \sum_{e_{k} \in \Gamma_{I}}\int_{e_{k}}\Big\{(\rho h_{d})_{u}(\vec{\beta}.\bold{n})w \Big\}[v].
\end{align}
Note that $\tilde{\mathscr{B}}$ is linear in $w$ and $v \in H^{2}(\Omega,\mathcal{P}_{h})$ for fixed value $\psi$.
It is clear from the assumptions on $\epsilon (u)$ and (from existence lemma (state the precise lemma here) for nonlinear elliptic PDE ) we have 
a unique solution $\psi \in H^{2}({\Omega})$ to the following elliptic problem :
\begin{align}
 -\nabla (\epsilon(u)\nabla \psi + \epsilon_{u}(u)\nabla u \psi) + \nabla \vec{\beta}(\rho h_{d}+(\rho h_{d})_{u}\psi) =\phi_{h} \text{ in } \Omega. \\
 \psi= 0 \text{ on } \partial \Omega.
\end{align}
and $\psi$ satisfies the following elliptic regularity condition 
\begin{align}
 |\vert \psi |\vert_{H^{2}(\Omega)} \le C|\vert \phi_{h} |\vert_{0}.
\end{align}
Now we will linearize problem (?) around $\mathcal{I}_{h}u$ which will be helpful in deriving few estimates. Substracting $\mathscr{B}(u;u,v)$ from both side of 
equation (?) we get
\begin{align}
\mathscr{B}(u;e,v_{h})=\sum_{i=1}^{N_{h}}\int_{K_{i}}(\epsilon(u_{h})-\epsilon(u))\nabla u_{h}.\nabla v_{h}
-\sum_{e_{k} \in \Gamma_{I}}\int_{e_{k}}\Big\{(\epsilon(u_{h})-\epsilon(u)) \frac{\partial u_{h}}{\partial \nu}\Big\}[v_{h}] \nonumber \\
-\theta \sum_{e_{k} \in \Gamma_{I}}\int_{e_{k}}\Big\{(\epsilon(u_{h})-\epsilon(u)) \frac{\partial v_{h}}{\partial \nu}\Big\}[u_{h}]
+\sum_{i=1}^{N_{h}}\int_{K_{i}}\{ (\rho h_{d})(u_{h})-(\rho h_{d})(u)\}\vec{\beta}.\bold{n}\nabla v_{h} ds \nonumber \\
-\sum_{e_{k} \in \Gamma_{I}}\int_{e_{k}}\Big\{(\rho h_{d})(u_{h})-(\rho h_{d})(u)) \vec{\beta}.\bold{n}\Big\}[v_{h}] ds.
\end{align}
Since $[u]=0$ on each $e_{k} \in \Gamma_{I}$, we rewrite the equation as
\begin{gather}
 \mathscr{B}(u;e,v_{h})=\sum_{i=1}^{N_{h}}\int_{K_{i}}(\epsilon(u_{h})-\epsilon(u))\nabla (u_{h}-u).\nabla v_{h}
+\sum_{i=1}^{N_{h}}\int_{K_{i}}(\epsilon(u_{h})-\epsilon(u))\nabla u.\nabla v_{h} \nonumber \\ 
-\sum_{e_{k} \in \Gamma_{I}}\int_{e_{k}}\Big\{(\epsilon(u_{h})-\epsilon(u)) \frac{\partial (u_{h}-u)}{\partial \nu}\Big\}[v_{h}]
-\sum_{e_{k} \in \Gamma_{I}}\int_{e_{k}}\Big\{(\epsilon(u_{h})-\epsilon(u)) \frac{\partial u}{\partial \nu}\Big\}[v_{h}]\nonumber \\
-\theta \sum_{e_{k} \in \Gamma_{I}}\int_{e_{k}}\Big\{(\epsilon(u_{h})-\epsilon(u)) \frac{\partial v_{h}}{\partial \nu}\Big\}[u_{h}-u]
+\sum_{i=1}^{N_{h}}\int_{K_{i}}\{ (\rho h_{d})(u_{h})-(\rho h_{d})(u)\}\vec{\beta}.\bold{n}\nabla v_{h} ds \nonumber \\
-\sum_{e_{k} \in \Gamma_{I}}\int_{e_{k}}\Big\{(\rho h_{d})(u_{h})-(\rho h_{d})(u)) \vec{\beta}.\bold{n}\Big\}[v_{h}] ds.
\end{gather}
Now adding both side by 
\begin{gather}
-\sum_{i=1}^{N_{h}}\int_{K_{i}} \epsilon_{u}(u)(u_{h}-u)\nabla u. \nabla v_{h}
+\sum_{e_{k} \in \Gamma_{I}}\int_{e_{k}}\Big\{\epsilon_{u}(u)(u_{h}-u)\frac{\partial u}{\partial \nu}\Big\}[v_{h}]\nonumber \\
-\sum_{i=1}^{N_{h}}\int_{K_{i}}\{ (\rho h_{d})_{u}(u)(u_{h}-u)\}\vec{\beta}.\bold{n}\nabla v_{h} ds
+\sum_{e_{k} \in \Gamma_{I}}\int_{e_{k}}\Big\{(\rho h_{d})_{u}(u)(u_{h}-u) \vec{\beta}.\bold{n}\Big\}[v_{h}] ds.
\end{gather}
By writing $e=u-u_{h}=u-\mathcal{I}_{h}u+\mathcal{I}_{h}u-u_{h}$ and using Taylor's formulae equation (?) we rewrite the term as
\begin{gather}
 \tilde{\mathscr{B}}(u;\mathcal{I}_{h}u-u_{h},v_{h})=\tilde{\mathscr{B}}(u;\mathcal{I}_{h}u-u,v_{h})+\tilde{\mathscr{F}}(u_{h};u_{h}-u,v_{h}),
\end{gather}
where 
\begin{gather}
\tilde{\mathscr{F}}(u_{h};u_{h}-u,v_{h})
=\sum_{i=1}^{N_{h}}\int_{K_{i}}\tilde{\epsilon}_{u}(u_{h}) e \nabla e.\nabla v_{h}
+\sum_{i=1}^{N_{h}}\int_{K_{i}}\tilde{\epsilon}_{uu}(u_{h}) e^{2}\nabla u.\nabla v_{h} \nonumber \\
-\sum_{e_{k} \in \Gamma_{I}}\int_{e_{k}}\Big\{\tilde{\epsilon}_{uu}(u_{h}) e^{2}\frac{\partial u}{\partial \nu}\Big\}[v_{h}]
-\sum_{e_{k} \in \Gamma_{I}}\int_{e_{k}}\Big\{\tilde{\epsilon}_{u}(u_{h}) e\frac{\partial e}{\partial \nu}\Big\}[v_{h}]\nonumber \\
-\theta \sum_{e_{k} \in \Gamma_{I}}\int_{e_{k}}\Big\{\tilde{\epsilon}_{u}(u_{h}) e\frac{\partial v_{h}}{\partial \nu}\Big\}[e]
+\sum_{i=1}^{N_{h}}\int_{K_{i}}\{ \tilde{(\rho h_{d})}_{uu}(u_{h}) e^{2}\}\vec{\beta}.\bold{n}\nabla v_{h} ds \nonumber \\
-\sum_{e_{k} \in \Gamma_{I}}\int_{e_{k}}\Big\{\tilde{(\rho h_{d})}_{uu}(u_{h}) e^{2}\vec{\beta}.\bold{n}\Big\}[v_{h}] ds.
\end{gather}
\subsection{Existence and Uniqueness}
For a given $z \in \mathcal{D}^{p}_{h}(\mathcal{P}_{h})$, let $\mathcal{S}_{h}:\mathcal{D}^{p}_{h}(\mathcal{P}_{h}) \rightarrow  \mathcal{D}^{p}_{h}(\mathcal{P}_{h})$
be a mapping define as $y= \mathcal{S}_{h} z \in \mathcal{D}^{p}_{h}(\mathcal{P}_{h}) $ and satisfies
\begin{align}
\tilde{\mathscr{B}}(u;\mathcal{I}_{h}u-y,v_{h}) =\tilde{\mathscr{B}}(u;\mathcal{I}_{h}u-u,v_{h}) + \tilde{\mathscr{F}}(z;z-u,v_{h}) \forall v_{h} \in \mathcal{D}^{p}_{h}(\mathcal{P}_{h}). 
\end{align}
\begin{lemma}
Consider $\beta \ge 1$ and $z,v_{h} \in \mathcal{D}^{p}_{h}(\mathcal{P}_{h})$ also define $\chi= z-\Pi_{h} u$ and $\eta=u-\Pi_{h} u$, then there
exist a constant $C$ independent of h and p such that the following condition satisfies
\begin{align}
\Big|\mathcal{F}(z;z-u,v_{h})\Big| \le CC_{\alpha}\Big[ \Big(\max_{1\le i \le N_{h}}\frac{p_{i}}{h_{i}} \Big)^{1/2}\vertiii{\chi}^{2}
+C_{u}h^{1/2}(\vertiii{\chi}+\vertiii{\eta})\Big]\vertiii{v_{h}}.
\end{align}

\end{lemma}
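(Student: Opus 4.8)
The plan is to bound the nonlinear remainder $\tilde{\mathscr{F}}(z;z-u,v_{h})$ term by term against $\vertiii{v_{h}}$, exploiting the decomposition $e=u-z=\eta-\chi$, where $\eta=u-\Pi_{h}u$ is the interpolation error and $\chi=z-\Pi_{h}u\in\mathcal{D}^{p}_{h}(\mathcal{P}_{h})$ is a genuine discrete function. Before touching any individual term I would record that the integral-remainder coefficients $\tilde{\epsilon}_{u}(z)$, $\tilde{\epsilon}_{uu}(z)$ and $\tilde{(\rho h_{d})}_{uu}(z)$ are, by the standing assumption $0<\epsilon_{1}\le\epsilon\le M_{*}$ together with the smoothness of the constitutive laws, uniformly bounded in $L^{\infty}$; pulling these bounds out of every integral is exactly what produces the prefactor $C_{\alpha}$, so from then on the coefficients may be treated as constants of size $O(1)$.

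For the three interior volume contributions, namely $\int_{K_{i}}\tilde{\epsilon}_{u}(z)\,e\,\nabla e\cdot\nabla v_{h}$, $\int_{K_{i}}\tilde{\epsilon}_{uu}(z)\,e^{2}\,\nabla u\cdot\nabla v_{h}$ and $\int_{K_{i}}\tilde{(\rho h_{d})}_{uu}(z)\,e^{2}\,\vec{\beta}\cdot\bold{n}\,\nabla v_{h}$, I would apply H\"older's inequality on each element to reduce them to products of the type $\|e\|_{L^{\infty}(K_{i})}\|\nabla e\|_{0,K_{i}}\|\nabla v_{h}\|_{0,K_{i}}$ and $\|e\|_{L^{\infty}(K_{i})}^{2}\|\nabla u\|_{0,K_{i}}\|\nabla v_{h}\|_{0,K_{i}}$. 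Substituting $e=\eta-\chi$ separates each product into a purely discrete part and an interpolation part. On the discrete part the inverse inequality for polynomials of degree $p_{i}$ on an element of diameter $h_{i}$ converts $\|\chi\|_{L^{\infty}(K_{i})}$ into a weighted $L^{2}$ quantity, and after a Cauchy--Schwarz summation over the elements (using the broken Poincar\'e bound $\|\chi\|_{0}\le C\vertiii{\chi}$ and $\|\nabla\chi\|_{0}\le\vertiii{\chi}$) this is where the factor $\big(\max_{i}p_{i}/h_{i}\big)^{1/2}\vertiii{\chi}^{2}$ emerges. On the interpolation part I would instead invoke the interpolation estimate $|\eta|_{s,K}\le C_{A}h^{2-s}|u|_{2,K}$; the surviving powers of $h$ collapse to the $h^{1/2}$ in the statement, while the $u$-dependent factors are absorbed into $C_{u}$, giving the linear contribution $C_{u}h^{1/2}(\vertiii{\chi}+\vertiii{\eta})$.

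The four edge integrals over $e_{k}\in\Gamma_{I}$ I would handle together. On each edge I first apply Cauchy--Schwarz to peel off the jump factor $[v_{h}]$ (or $[e]$ in the $\theta$-term), and then use the stated trace inequality $\|\phi\|_{e}^{2}\le C(h_{e}^{-1}|\phi|_{0,K}^{2}+h_{e}|\phi|_{1,K}^{2})$ to transfer the normal-flux averages $\{\partial u/\partial\nu\}$, $\{\partial e/\partial\nu\}$ and $\{\partial v_{h}/\partial\nu\}$ from the edges to the adjacent elements. The jump contributions are matched with the penalty weight $a_{k}p_{k}^{2}/|e_{k}|^{\beta}$ carried by $\vertiii{\cdot}^{2}$, while the fluxes, after the trace step and an inverse estimate on $v_{h}$, are again controlled by $\vertiii{v_{h}}$; the hypothesis $\beta\ge1$ is precisely what guarantees that these weights combine consistently so that no negative residual power of $h_{e}$ survives. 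Each edge term is thereby absorbed into $(\max_{i}p_{i}/h_{i})^{1/2}\vertiii{\chi}^{2}$ or into $C_{u}h^{1/2}(\vertiii{\chi}+\vertiii{\eta})$ after summation over $\Gamma_{I}$.

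Adding the seven bounds and collecting like terms yields the asserted inequality. The main obstacle is the quadratic nonlinearity: the terms carrying $e^{2}$ and $e\,\nabla e$ force one to control a product of two copies of $e$, and extracting the \emph{sharp} power $(\max_{i}p_{i}/h_{i})^{1/2}$ — rather than the cruder $\max_{i}p_{i}/h_{i}$ that a naive $L^{\infty}$ inverse estimate would yield — requires carefully balancing the inverse inequality on the discrete factor against the broken Poincar\'e and trace inequalities while tracking the $p$-dependence at each step. This is compounded by the fact that $e$ mixes a polynomial part $\chi$ (amenable to inverse estimates) with a non-polynomial part $\eta$ (amenable only to interpolation estimates), so every quadratic term splits into cross terms that must be correctly allocated between the two contributions of the final bound.
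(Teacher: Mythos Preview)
Your overall architecture matches the paper's: expand $\tilde{\mathscr{F}}(z;z-u,v_{h})$ into its seven volume and edge pieces, write $\vartheta=z-u=\chi-\eta$, and bound each cross term by either an inverse estimate on the discrete piece $\chi$ or an interpolation estimate on $\eta$. The gap is in the H\"older exponent you choose for the volume terms, and your proposal is internally inconsistent on precisely this point. In your second paragraph you commit to the splitting $\|e\|_{L^{\infty}(K_{i})}\|\nabla e\|_{0,K_{i}}\|\nabla v_{h}\|_{0,K_{i}}$ and assert that the factor $(\max_{i}p_{i}/h_{i})^{1/2}$ ``emerges'' from it; but in your final paragraph you correctly observe that the $L^{\infty}$ inverse estimate in two dimensions costs a full power $p_{i}/h_{i}$, not $1/2$. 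You then say the sharp exponent ``requires carefully balancing'' without ever specifying the mechanism. That mechanism is the missing ingredient.

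The paper's resolution is a different H\"older triple. For the leading volume piece $\int_{K_{i}}\tilde{\epsilon}_{u}(z)\,\chi\,\nabla\chi\cdot\nabla v_{h}$ it uses $L^{6}\times L^{3}\times L^{2}$,
\[
\Big|\int_{K_{i}}\tilde{\epsilon}_{u}(z)\,\chi\,\nabla\chi\cdot\nabla v_{h}\Big|\le C_{\epsilon}\,\|\chi\|_{L^{6}(K_{i})}\,\|\nabla\chi\|_{L^{3}(K_{i})}\,\|\nabla v_{h}\|_{L^{2}(K_{i})},
\]
so that the first factor is handled by the broken Sobolev embedding $\|\chi\|_{L^{6}}\le C\vertiii{\chi}$ (valid since $H^{1}\hookrightarrow L^{6}$ in two dimensions) with \emph{no} inverse estimate, and only the middle factor is inverted via $\|\nabla\chi\|_{L^{3}(K_{i})}\le C(p_{i}/h_{i})^{1/3}\|\nabla\chi\|_{L^{2}(K_{i})}$ from the inverse lemma with $r=3$. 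This yields $(p_{i}/h_{i})^{1/3}$ on the volume contributions; the $(p_{i}/h_{i})^{1/2}$ appearing in the lemma is then driven by the \emph{edge} integrals, where the trace inequality, an edge inverse estimate, and the penalty weight $p_{k}^{2}/|e_{k}|^{\beta}$ combine (for $\beta\ge1$) to give a factor $\max_{i}p_{i}^{1/2}/h_{i}^{1-\beta/2}\le(\max_{i}p_{i}/h_{i})^{1/2}$. Your $L^{\infty}$ route cannot reach this exponent because it spends the full power $p_{i}/h_{i}$ on the undifferentiated factor before any Sobolev gain is available; to repair the argument you must replace that splitting by the $L^{6}\times L^{3}\times L^{2}$ one (or an equivalent interpolation) on each of the quadratic volume terms.
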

\begin{proof}
Let $z \in \mathcal{D}^{p}_{h}(\mathcal{P}_{h})$ and set $\vartheta=z-u$. Now consider equation (4.13) and substitute $u_{h}$ by $z$ and by $z-u$
to get
\begin{align}
\mathcal{F}(z;\vartheta,v_{h})=\sum_{i=1}^{N_{h}}\int_{K_{i}} \tilde{\epsilon}_{u}(z)\vartheta \nabla \vartheta.\nabla v_{h}
+\sum_{i=1}^{N_{h}}\int_{K_{i}} \tilde{\epsilon}_{uu}(z)\vartheta^{2} \nabla u.\nabla v_{h} \nonumber \\
-\sum_{e_{k} \in \Gamma_{I}}\int_{e_{k}} \Big\{\tilde{\epsilon}_{uu}(z)\vartheta^{2}\nabla u.\bold{n}\Big\}[v_{h}]
-\sum_{e_{k} \in \Gamma_{I}}\int_{e_{k}} \Big\{\tilde{\epsilon}_{u}(z)\vartheta\nabla \vartheta.\bold{n}\Big\}[v_{h}] \nonumber \\
-\theta \sum_{e_{k} \in \Gamma_{I}}\int_{e_{k}} \Big\{\tilde{\epsilon}_{u}(z)\vartheta\nabla v_{h}.\bold{n}\Big\}[\vartheta]
+\sum_{i=1}^{N_{h}}\int_{K_{i}}\tilde{(\rho h_{d})}_{uu}(z) \vartheta^{2} \vec{\beta}.\bold{n} \nabla v_{h}\nonumber \\
-\sum_{e_{k} \in \Gamma_{I}}\int_{e_{k}} \Big\{\tilde{(\rho h_{d})}_{uu}(z)\vartheta^{2} \vec{\beta}.\bold{n}\Big\}[v_{h}].
\end{align}
Now putting the value of $\vartheta=\chi-\eta$, where $\chi=z-\Pi_{h}u$ and $\eta=u-\Pi_{h}u$ in above equation (35) and estimating 
the right hand side term we get
First part of the right hand side of equation (35) is approximated as
\begin{align}
 |I| = \Big|\sum_{i=1}^{K_{h}}\int_{K_{i}}\tilde{\epsilon}_{u}(z)\vartheta \nabla \vartheta.\nabla v_{h} \Big| 
 \le \Big|\sum_{i=1}^{K_{h}}\int_{K_{i}}\tilde{\epsilon}_{u}(z)\chi \nabla \chi.\nabla v_{h} \Big| \nonumber \\
 \Big|\sum_{i=1}^{K_{h}}\int_{K_{i}}\tilde{\epsilon}_{u}(z)\chi \nabla \eta.\nabla v_{h} \Big|
 +\Big|\sum_{i=1}^{K_{h}}\int_{K_{i}}\tilde{\epsilon}_{u}(z)\eta \nabla \chi.\nabla v_{h} \Big|+
 \Big|\sum_{i=1}^{K_{h}}\int_{K_{i}}\tilde{\epsilon}_{u}(z)\eta \nabla \eta.\nabla v_{h} \Big|.
\end{align}
Now using inverse inequality estimates stated below without proof as
\begin{lemma}[Inverse Inequality]
Suppose $v_{h} \in \mathcal{Z}(K_{i})$ and let $r \ge 2$. Then $\exists C >0$ such that following conditions holds
\begin{align}
 \lVert v_{h} \rVert_{L^{r}(K_{i})} \le C_{I}p_{i}^{1-2/r}h_{i}^{2/r-1}\lVert v_{h} \rVert_{L^{2}(K_{i})} \\
 | v_{h} |_{H^{l}(K_{i})} \le C_{I}p_{i}^{2}h_{i}^{-1}|v_{h}|_{H^{l-1}(K_{i})} \\
 \lVert v_{h} \rVert_{L^{r}(e_{k})} \le C_{I}p_{i}^{1-2/r}|e_{k}|^{1/r-1/2}\lVert v_{h} \rVert_{L^{2}(e_{k})}.
\end{align}
\end{lemma}
Now using Holder's inequality, first part of right hand side of equation (36) is estimated as
\begin{align}
\Big|\sum_{i=1}^{K_{h}}\int_{K_{i}}\tilde{\epsilon}_{u}(z)\chi \nabla \chi.\nabla v_{h} \Big|
\le C_{\epsilon}\sum_{i=1}^{N_{h}}\lVert \chi \rVert_{L^{6}(K_{i})}\lVert \nabla \chi \rVert_{L^{3}(K_{i})}
\lVert \nabla v_{h} \rVert_{L^{2}(K_{i})}
\end{align}
Now using the inverse property of lemma 3.8 we have
\begin{align}
\Big|\sum_{i=1}^{K_{h}}\int_{K_{i}}\tilde{\epsilon}_{u}(z)\chi \nabla \chi.\nabla v_{h} \Big|
\le C_{\epsilon}\Big(\max_{1 \le i \le N_{h}}\frac{p_{i}}{h_{i}}\Big)^{1/3}\vertiii{\chi}^{2}\vertiii{v_{h}}
\end{align}
Similarly, second part of right hand side of equation (36) is approximated as 
\begin{align}
 \Big|\sum_{i=1}^{K_{h}}\int_{K_{i}}\tilde{\epsilon}_{u}(z)\chi \nabla \eta.\nabla v_{h} \Big| \le
 C_{\epsilon}\sum_{i=1}^{N_{h}}\lVert \chi \rVert_{L^{6}(K_{i})}\lVert \nabla \eta \rVert_{L^{3}(K_{i})}
\lVert \nabla v_{h} \rVert_{L^{2}(K_{i})}.
\end{align}
Now using above property of inverse inequality in equation (38) we obtain
\begin{align}
 \Big|\sum_{i=1}^{K_{h}}\int_{K_{i}}\tilde{\epsilon}_{u}(z)\chi \nabla \eta.\nabla v_{h} \Big| \le
 C_{\epsilon}\sum_{i=1}^{N_{h}}\lVert \chi \rVert_{L^{6}(K_{i})}\lVert \nabla \eta \rVert_{L^{3}(K_{i})}
\lVert \nabla v_{h} \rVert_{L^{2}(K_{i})} \nonumber \\
\le  C_{\epsilon}\frac{h^{2/3}}{p^{2/3}} \lVert u \rVert_{H^{2}(\Omega)}\vertiii{\chi} \vertiii{v_{h}}
\end{align}
Now using similar property we can show that 
\begin{align}
 \Big|\sum_{i=1}^{K_{h}}\int_{K_{i}}\tilde{\epsilon}_{u}(z)\eta \nabla \chi.\nabla v_{h} \Big|
\le  C_{\epsilon}\frac{h^{2/3}}{p^{2/3}} \lVert u \rVert_{H^{2}(\Omega)}\vertiii{\chi} \vertiii{v_{h}}
\end{align}
and 
\begin{align}
 \Big|\sum_{i=1}^{K_{h}}\int_{K_{i}}\tilde{\epsilon}_{u}(z)\eta \nabla \eta.\nabla v_{h} \Big|
\le  C_{\epsilon}\frac{h^{2/3}}{p^{2/3}} \lVert u \rVert_{H^{2}(\Omega)}\vertiii{\eta} \vertiii{v_{h}}
\end{align}
hold.\\
Now second term of right hand side of equation $(35)$ is estimated as 
\begin{align}
 \Big|\sum_{i=1}^{N_{h}}\int_{K_{i}}\tilde{\epsilon_{uu}(z)}\vartheta^{2} \nabla u. \nabla v_{h} \Big| \le 
 C_{\epsilon}\sum_{i=1}^{N_{h}}\int_{K_{i}}|\chi^{2} \nabla u. \nabla v_{h}|
 +C_{\epsilon}\sum_{i=1}^{N_{h}}\int_{K_{i}}|\eta^{2} \nabla u. \nabla v_{h}| \nonumber \\
 +2C_{\epsilon}\sum_{i=1}^{N_{h}}\int_{K_{i}}|\chi.\eta \nabla u. \nabla v_{h}|.
\end{align}
Now using Holder's inequality in right hand side of above equation (46) we have 
\begin{align}
 \sum_{i=1}^{N_{h}}\int_{K_{i}}|\chi^{2} \nabla u. \nabla v_{h}| \le
 \Big(\max_{1 \le i \le N_{h}}\frac{p_{i}}{h_{i}}\Big)^{1/3}\vertiii{\chi}^{2}\vertiii{v_{h}}.
\end{align}
Second part of right hand side of equation (46) is estimated as
\begin{align}
 \sum_{i=1}^{N_{h}}\int_{K_{i}}|\eta^{2} \nabla u. \nabla v_{h}| \le
  h^{3/2}\lVert u \rVert_{H^{1}(\Omega)}|u |_{W^{1}_{\infty}(\Omega)}\vertiii{\eta}\vertiii{v_{h}}.
\end{align}
Third part of right hand side of equation (46) is estimated as 
\begin{align}
 \sum_{i=1}^{N_{h}}\int_{K_{i}}|\eta \chi \nabla u. \nabla v_{h}| \le
  h^{3/2}\lVert u \rVert_{H^{1}(\Omega)}|u |_{W^{1}_{\infty}(\Omega)}\vertiii{\chi}\vertiii{v_{h}}.
\end{align}
Now putting values from equation (46)-(48) in equation (45) we get the following estimate
\begin{align}
 \Big|\sum_{i=1}^{N_{h}}\int_{K_{i}}\tilde{\epsilon_{uu}(z)}\vartheta^{2} \nabla u. \nabla v_{h} \Big| \le 
 C_{\epsilon}\Big(\max_{1 \le i \le N_{h}}\frac{p_{i}}{h_{i}}\Big)^{1/3}\vertiii{\chi}^{2}\vertiii{v_{h}}+ \\
 C_{\epsilon}h^{3/2}\lVert u \rVert_{H^{1}(\Omega)}|u |_{W^{1}_{\infty}(\Omega)}\vertiii{\chi}\vertiii{v_{h}}
 +C_{\epsilon}h^{3/2}\lVert u \rVert_{H^{1}(\Omega)}|u |_{W^{1}_{\infty}(\Omega)}\vertiii{\eta}\vertiii{v_{h}}.
\end{align}
Now Third term of right hand side of equation (35) is estimated as
\begin{align}
 \Big|\sum_{e_{k} \in \Gamma_{I}}\int_{e_{k}} \Big\{\tilde{\epsilon}_{uu}(z)\vartheta^{2}\nabla u.\bold{n}\Big\}[v_{h}]\Big|
 \le C_{\epsilon}\sum_{e_{k} \in \Gamma_{I}}\int_{e_{k}}\Big| \Big\{\chi^{2}\nabla u.\bold{n}\Big\}\Big|\Big|[v_{h}]\Big| \nonumber \\
+C_{\epsilon}\sum_{e_{k} \in \Gamma_{I}}\int_{e_{k}}\Big| \Big\{\eta^{2}\nabla u.\bold{n}\Big\}\Big|\Big|[v_{h}]\Big|
+ 2 C_{\epsilon}\sum_{e_{k} \in \Gamma_{I}}\int_{e_{k}}\Big| \Big\{\eta \chi \nabla u.\bold{n}\Big\}\Big|\Big|[v_{h}]\Big| \nonumber \\
\le \nonumber \\
C_{\epsilon} \Big(\max_{1 \le i \le N_{h}} \frac{p_{i}^{1/2}}{h_{i}^{1-\beta/2}} \Big) \vertiii{\chi}^{2} 
\lVert u \rVert_{H^{1}(\Omega)}\vertiii{v_{h}}+ \nonumber \\
h^{(\beta + 1)/2}(1+ p)^{1/4}\vertiii{\eta}\vertiii{v_{h}}\lVert u \rVert_{H^{1}(\Omega)} \lVert u \rVert_{H^{2}(\Omega)} + \nonumber \\
h^{(\beta + 1)/2}(1+ p)^{1/4}\vertiii{\chi}\vertiii{v_{h}}\lVert u \rVert_{H^{1}(\Omega)} \lVert u \rVert_{H^{2}(\Omega)}. 
\end{align}
Now Fourth term of right hand side of equation (35) is estimated as
\begin{align}
 \Big| \sum_{e_{k} \in \Gamma}\int_{e_{k}}\Big\{\tilde{\epsilon}_{u}(z)\vartheta \nabla \vartheta .\bold{n}\Big\}[v_{h}]\Big|
 \le \sum_{e_{k} \in \Gamma}\int_{e_{k}}\Big|\Big\{\chi \nabla \chi .\bold{n}\Big\}\Big| \Big|[v_{h}]\Big|+ \nonumber \\
 \sum_{e_{k} \in \Gamma}\int_{e_{k}}\Big|\Big\{\eta \nabla \chi .\bold{n}\Big\}\Big| \Big|[v_{h}]\Big|+
 \sum_{e_{k} \in \Gamma}\int_{e_{k}}\Big|\Big\{\chi \nabla \eta .\bold{n}\Big\}\Big| \Big|[v_{h}]\Big|+ \nonumber \\
 \sum_{e_{k} \in \Gamma}\int_{e_{k}}\Big|\Big\{\eta \nabla \eta .\bold{n}\Big\}\Big| \Big|[v_{h}]\Big|.
\end{align}
Now Fifth term of right hand side of equation (35) is estimated as
\begin{align}
 \Big| \theta \sum_{e_{k} \in \Gamma_{I}}\int_{e_{k}}\Big\{\tilde{\epsilon}_{u}(z)
 \vartheta \nabla v_{h}.\bold{n}\Big\}[\vartheta] \Big| \le
 C_{\epsilon} \Big( \sum_{e_{k} \in \Gamma_{I}}\int_{e_{k}} \Big|\Big\{\eta \nabla v_{h}.\bold{n}\Big\}[\eta] \Big| \nonumber \\
 + \sum_{e_{k} \in \Gamma_{I}}\int_{e_{k}} \Big|\Big\{\eta \nabla v_{h}.\bold{n}\Big\}[\chi]\Big|
 + \sum_{e_{k} \in \Gamma_{I}}\int_{e_{k}}\Big| \Big\{\chi \nabla v_{h}.\bold{n}\Big\}[\eta]\Big| \nonumber \\
 + \sum_{e_{k} \in \Gamma_{I}}\int_{e_{k}} \Big|\Big\{\chi \nabla v_{h}.\bold{n}\Big\}[\chi]\Big|\Big)
\end{align}
Now Sixth part of right hand side of equation (35) is estimated as
\begin{align}
\Big|\sum_{i=1}^{N_{h}}\int_{K_{i}}\tilde{(\rho h_{d})}_{uu}(z) \vartheta^{2} \vec{\beta}.\bold{n} \nabla v_{h}\Big|
\le C_{\rho h}\Big(\vertiii{\chi}^{2}\vertiii{v_{h}}+(\frac{h}{p})^{2}\vertiii{\eta}\vertiii{v_{h}}\lVert u \rVert_{H^{2}(\Omega)}
\nonumber \\
+(\frac{h}{p})^{2}\vertiii{\chi}\vertiii{v_{h}}\lVert u \rVert_{H^{2}(\Omega)}\Big).
\end{align}
At last Seventh part of right hand side of equation (35) is estimated as
\begin{align}
\Big|\sum_{e_{k} \in \Gamma_{I}}\int_{e_{k}} \Big\{\tilde{(\rho h_{d})}_{uu}(z)\vartheta^{2} \vec{\beta}.\bold{n}\Big\}[v_{h}]\Big|
\le \vertiii{\chi}^{2}\vertiii{v_{h}}
+h^{(\beta + 1)/2}(1+ p)^{1/4} \nonumber \\
\vertiii{\chi}\vertiii{v_{h}}\lVert u \rVert_{H^{1}(\Omega)} \lVert u \rVert_{H^{2}(\Omega)}
+h^{(\beta + 1)/2}(1+ p)^{1/4}\vertiii{\eta}\vertiii{v_{h}}\lVert u \rVert_{H^{1}(\Omega)} \lVert u \rVert_{H^{2}(\Omega)}
\end{align}

\end{proof}
\begin{lemma}
Suppose $z \in \mathcal{D}^{p}_{h}(\mathcal{P}_{h})$ and $\beta \ge 1$. Also take $y=\mathcal{S}z$. Then there exists a 
non negative constant $C$ which is independent of $h$ and $p$ such that following condition
\begin{align}
 \vertiii{\Pi_{h} u-y} \le CC_{\epsilon}\Big[\Big(\max_{1 \le i \le N_{h}} \frac{p}{h} \Big)^{1/2} \vertiii{\Pi_{h} u - z}^{2}
 + C_{} \vertiii{\Pi_{h} u - z} \nonumber \\
 +CC_{\epsilon}(1+C_{\epsilon} h^{1/2}) \vertiii{\Pi_{h} u -u} \Big]
\end{align}
hold.
\end{lemma}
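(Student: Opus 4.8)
The plan is to test the defining relation for $\mathcal{S}$ with the natural choice $v_h = \Pi_h u - y$ and then balance a coercivity estimate from below against a continuity estimate and the bound on $\tilde{\mathscr{F}}$ from above. Since $y = \mathcal{S}z \in \mathcal{D}^{p}_h(\mathcal{P}_h)$ and $\Pi_h u - y$ is an admissible test function, inserting $v_h = \Pi_h u - y$ into the definition of $\mathcal{S}$ yields the identity
\[
\tilde{\mathscr{B}}(u;\Pi_h u - y,\Pi_h u - y) = \tilde{\mathscr{B}}(u;\Pi_h u - u,\Pi_h u - y) + \tilde{\mathscr{F}}(z;z-u,\Pi_h u - y).
\]
The left-hand side is precisely the form I want to bound below by $\vertiii{\Pi_h u - y}^2$, while the right-hand side splits into a pure interpolation-error contribution and the nonlinear remainder that was already estimated in the preceding lemma.

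First I would establish coercivity of the linearized form on the discrete space, namely $\tilde{\mathscr{B}}(u;w,w) \ge C\vertiii{w}^2$ for all $w \in \mathcal{D}^{p}_h(\mathcal{P}_h)$ with $C$ independent of $h$ and $p$. The coercive contribution comes from $\mathscr{B}(u;\cdot,\cdot)$ through the coercivity lemma (with $a_k$ large and $\beta \ge 1$, so the penalty dominates the consistency and symmetry edge terms). The difficulty is that $\tilde{\mathscr{B}}$ carries the extra linearization terms involving $\epsilon_u(u)\nabla u$ and $(\rho h_d)_u$; each of these is a lower-order perturbation which I would bound, using the trace inequality and the inverse inequalities of Lemma 3.8 together with the regularity $u \in H^2(\Omega)$, by a constant times a positive power of $h$ (effectively $h^{1/2}$) times $\vertiii{w}^2$. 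For $h$ small enough these perturbations are absorbed into the coercive term, which is exactly the mechanism producing the smallness factors in the claimed estimate.

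Next I would bound the two terms on the right. For the interpolation term, continuity of $\tilde{\mathscr{B}}(u;\cdot,\cdot)$ in the mesh-dependent norm gives $|\tilde{\mathscr{B}}(u;\Pi_h u - u,\Pi_h u - y)| \le C\vertiii{\Pi_h u - u}\,\vertiii{\Pi_h u - y}$, where the continuity constant is controlled just as in the boundedness and coercivity lemmas (Cauchy--Schwarz on the element integrals, trace and inverse inequalities on the edge integrals). For the nonlinear remainder I would apply the previous lemma with $\chi = z - \Pi_h u$ and $\eta = u - \Pi_h u$, so that
\[
|\tilde{\mathscr{F}}(z;z-u,\Pi_h u - y)| \le CC_\epsilon\Big[\Big(\max_{1\le i\le N_h}\tfrac{p_i}{h_i}\Big)^{1/2}\vertiii{\Pi_h u - z}^2 + C_u h^{1/2}\big(\vertiii{\Pi_h u - z}+\vertiii{\Pi_h u - u}\big)\Big]\vertiii{\Pi_h u - y}.
\]

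Finally I would combine the pieces: using coercivity on the left and the two bounds on the right, every term carries a factor $\vertiii{\Pi_h u - y}$, so dividing through by it (the case $\vertiii{\Pi_h u - y}=0$ being trivial) removes this factor and leaves the asserted bound. The $\vertiii{\Pi_h u - z}^2$ and $\vertiii{\Pi_h u - z}$ contributions come directly from $\tilde{\mathscr{F}}$, while the coefficient $(1+C_\epsilon h^{1/2})$ on $\vertiii{\Pi_h u - u}$ arises because this interpolation-error norm enters both through the continuity bound on $\tilde{\mathscr{B}}(u;\Pi_h u - u,\cdot)$ (the $1$) and through the $h^{1/2}$ part of the $\tilde{\mathscr{F}}$ estimate (the $C_\epsilon h^{1/2}$). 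The main obstacle throughout is the coercivity step: verifying that, after the inverse and trace inequalities are applied, the extra linearization terms in $\tilde{\mathscr{B}}$ genuinely come with a small factor and do not overwhelm the coercive contribution of $\mathscr{B}$; this is where the hypotheses $\beta \ge 1$ and $h$ sufficiently small are essential.
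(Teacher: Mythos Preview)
Your proposal is correct and follows essentially the same route as the paper: set $\xi=\Pi_h u-y$, $\eta=\Pi_h u-u$, $\chi=\Pi_h u-z$, test the defining relation of $\mathcal{S}$ with $v_h=\xi$, bound $|\tilde{\mathscr{B}}(u;\eta,\xi)|\le C\vertiii{\eta}\vertiii{\xi}$ by continuity, bound $|\tilde{\mathscr{F}}(z;z-u,\xi)|$ via the preceding lemma, and then use coercivity of $\tilde{\mathscr{B}}(u;\cdot,\cdot)$ to arrive at the stated estimate after dividing by $\vertiii{\xi}$. Your discussion of why the linearization terms in $\tilde{\mathscr{B}}$ do not destroy coercivity is in fact more explicit than the paper's, which simply invokes ``the coercivity property'' at that step.
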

\begin{proof}
Consider $\chi= \Pi_{h}u-z$, $\eta=\Pi_{h}u-u$ and $\xi=\Pi_{h}u-y$. Take $v_{h}=\xi$ in (**)
\begin{align}
 \Big|\tilde{\mathcal{B}}(u;\eta,\xi)\Big| \le 
 C \vertiii{\eta}\vertiii{\xi}.
\end{align}
Put $v_{h}=\xi$ in lemma (**) to obtain 
\begin{align}
 \Big|\mathcal{F}(z;z-u,\xi)\Big| \le CC_{\epsilon}\Big(\max_{1 \le i \le N_{h}}\frac{p_{i}}{h_{i}}\Big)^{1/2}
 \vertiii{\chi}^{2}\vertiii{\xi}+CC_{\epsilon}C_{u}h^{1/2}(\vertiii{\chi}+\vertiii{\eta})\vertiii{\xi}.
\end{align}
Now from above equation (57) and equation (58) and also from equation (**) we obtain
\begin{align}
 \Big|\mathcal{F}(z;z-u,\xi)\Big| \le CC_{\epsilon}\Big(\Big(\max_{1 \le i \le N_{h}}\frac{p_{i}}{h_{i}}\Big)^{1/2} 
 \vertiii{\chi}^{2}\nonumber \\
 +CC_{\epsilon}C_{u}h^{1/2}(\vertiii{\chi}+(CC_{\epsilon}C_{u}h^{1/2}+1)\vertiii{\eta})\Big)\vertiii{\xi}. 
\end{align}
Now using the coericivity property we have 
\begin{align}
 \vertiii{\xi}^{2} \le CC_{\epsilon}\Big(\Big(\max_{1 \le i \le N_{h}}\frac{p_{i}}{h_{i}}\Big)^{1/2} 
 \vertiii{\chi}^{2}\nonumber \\
 +CC_{\epsilon}C_{u}h^{1/2}(\vertiii{\chi}+(CC_{\epsilon}C_{u}h^{1/2}+1)\vertiii{\eta})\Big)\vertiii{\xi}.
\end{align}
Hence we have the desire result.

\end{proof}
\subsection{$L^{2}$-Error Estimates}
In order to bound error $\lVert u-u_{h}\rVert_{L^{2}}$ in $L^{2}$ norms we use  well known Aubin-Nitsche duality argument.
\begin{theorem}
 Let $\epsilon \in C^{2}_{b}(\Omega \times \mathbb{R})$ and $u \in W^{1}_{\infty}(\Omega)$. Suppose $\mathcal{P}_{h}^{p}$ is a regular partition.
 Then for sufficiently small $h$, there exists a constant $C=C(\alpha_{*}, M)$ which independent of $h$ and $p$ such that 
 \begin{align}
  \lVert u-u_{h}\rVert_{L^{2}} \le C C_{*}\frac{h^{\mu}}{p^{s}}\lVert u\rVert_{H^{s}(\Omega)}.
 \end{align}
\end{theorem}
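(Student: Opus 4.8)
The plan is to run the Aubin–Nitsche duality argument, lifting the energy-norm ($\vertiii{\cdot}$) control of the discrete error into the sharper $L^2$ bound by solving an auxiliary linearized adjoint problem and exploiting the additional approximation power available when the dual solution is smooth. Throughout write $e = u - u_h$, $\eta = u - \mathcal{I}_h u$, and let $\chi$ denote the quantity $u_h - \mathcal{I}_h u$ already appearing in the energy analysis.

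First I would introduce the dual problem: regarding $e$ as data, find $\varphi \in H^1_0(\Omega)\cap H^2(\Omega)$ solving the formal adjoint of the linearized operator underlying $\tilde{\mathscr{B}}(u;\cdot,\cdot)$, normalized so that $(v,e)_{L^2} = \tilde{\mathscr{B}}(u;v,\varphi)$ for all $v \in H^1_0(\Omega)$. The hypothesis $\epsilon \in C^2_b(\Omega\times\mathbb{R})$ together with the elliptic regularity estimate already recorded for the linearized problem gives $\|\varphi\|_{H^2(\Omega)} \le C\,\|e\|_{L^2(\Omega)}$. Taking $v = e$ and using the consistency of the DG form (elementwise integration by parts, with the jumps of $\varphi$ vanishing) yields the representation
\[
\|e\|_{L^2(\Omega)}^2 = \tilde{\mathscr{B}}(u;e,\varphi).
\]

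Next I split $\varphi = (\varphi - \mathcal{I}_h\varphi) + \mathcal{I}_h\varphi$. For the non-discrete part, boundedness of $\tilde{\mathscr{B}}(u;\cdot,\cdot)$ on $H^2(\Omega,\mathcal{P}_h)$ and the interpolation estimate $\vertiii{\varphi-\mathcal{I}_h\varphi}\le C\,(h/p)\,\|\varphi\|_{H^2(\Omega)}$ give
\[
\big|\tilde{\mathscr{B}}(u;e,\varphi-\mathcal{I}_h\varphi)\big| \le C\,\frac{h}{p}\,\vertiii{e}\,\|e\|_{L^2(\Omega)}.
\]
For the discrete part I would write $e = \eta + (\mathcal{I}_h u - u_h)$ and apply the linearization identity with $v_h = \mathcal{I}_h\varphi$. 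Since $\tilde{\mathscr{B}}$ is linear in its middle slot and $\mathcal{I}_h u - u = -\eta$, the linear interpolation contributions cancel and the discrete part collapses to the nonlinear remainder,
\[
\tilde{\mathscr{B}}(u;e,\mathcal{I}_h\varphi) = \tilde{\mathscr{F}}(u_h;u_h-u,\mathcal{I}_h\varphi).
\]
Bounding the right-hand side by the $\tilde{\mathscr{F}}$-estimate lemma and using the triple-norm stability $\vertiii{\mathcal{I}_h\varphi}\le C\,\|\varphi\|_{H^2(\Omega)}\le C\,\|e\|_{L^2(\Omega)}$ controls this term by the quantities $\vertiii{\chi}$ and $\vertiii{\eta}$.

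Collecting the two contributions, dividing through by $\|e\|_{L^2(\Omega)}$, and substituting the previously established energy-norm (a~priori $H^1$) estimate for $\vertiii{e}$ together with the interpolation bounds for $\vertiii{\eta}$ and $\vertiii{\chi}$, the factor $h/p$ produced by the duality step raises the convergence order by one power of $h$ and yields the claimed $h^{\mu}/p^{s}$ rate, the $p$-dependence being tracked through the inverse-inequality constants. The main obstacle will be the nonlinear remainder: the terms in $\tilde{\mathscr{F}}$ are quadratic in the error (e.g.\ the $(\max_{i}p_i/h_i)^{1/2}\vertiii{\chi}^2$ contribution), so to keep them of the correct order I must exploit the smallness of $\vertiii{e}$ coming from the a~priori bound, the regularity $u\in W^1_\infty(\Omega)$, and the light-load/small-$h$ hypothesis to absorb them, verifying that they do not spoil the duality gain. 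A careful bookkeeping of the $h$- and $p$-powers in these quadratic terms, rather than the duality setup itself, is where the real work lies.
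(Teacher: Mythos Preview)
Your plan is correct and matches the paper's approach: both run the Aubin--Nitsche duality argument with the linearized adjoint problem, invoke the elliptic regularity $\lVert\psi\rVert_{H^{2}}\le C\lVert e\rVert_{L^{2}}$, gain the extra $h/p$ factor by subtracting the interpolant of the dual solution in the test slot, and control the quadratic nonlinear remainder by $\vertiii{e}^{2}$. The only cosmetic difference is that the paper writes the representation as $\lVert e\rVert^{2}=\mathscr{B}(u;u,\psi)-\mathscr{B}(u_{h};u_{h},\psi)+\text{(explicit quadratic terms in $e$)}$ and uses the nonlinear Galerkin orthogonality $\mathscr{B}(u;u,v_{h})=\mathscr{B}(u_{h};u_{h},v_{h})$ to insert $\psi-\mathcal{I}_{h}\psi$, whereas you package the same content via $\tilde{\mathscr{B}}(u;e,\mathcal{I}_{h}\varphi)=\tilde{\mathscr{F}}(u_{h};u_{h}-u,\mathcal{I}_{h}\varphi)$ and the existing $\tilde{\mathscr{F}}$-lemma; the resulting bounds are identical.
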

\begin{proof}
Consider the following adjoint problem 
\begin{align}
 -\nabla (\epsilon^{*}(u)\nabla \psi)+\epsilon_{u}^{*}(u) \nabla u . \nabla \psi - \vec{\beta} \Big(\rho h_{d}
+(\rho h_{d})_{u}\Big)\nabla\psi= e \text{ in } \Omega \\
\psi=0 \text{ on } \partial \Omega.
\end{align}
Now from elliptic regularity property, there exist a unique $\psi \in H^{2}(\Omega)$ which satisfies above linear elliptic problem (?) and
 \begin{align}
  \lVert \psi \rVert_{H^{2}(\Omega)} \le C \lVert e \rVert_{L^{2}(\Omega)}
 \end{align}
\end{proof}
holds.\\
By short computation, It is easy to show that
\begin{align}
 \lVert e \rVert ^{2}= \mathscr{B}(u;u,\psi)-\mathscr{B}(u_{h};u_{h},\psi ) + \sum_{i=1}^{N_{h}} \int_{K_{i}}\Big(e \tilde{\epsilon_{u}} \nabla e - \tilde{\epsilon_{uu}}e^{2}\nabla u \Big)\nabla \psi \nonumber \\
 -\sum_{e_{k} \in \Gamma_{I}} \int_{e_{k}}\Big( \Big\{ \tilde{\epsilon_{u}}e\frac{\partial e}{\partial \nu}\Big\} - \Big\{ \tilde{\epsilon_{uu}}e^{2}\frac{\partial u}{\partial \nu}\Big\} \Big)[\psi]
 -\theta\sum_{e_{k} \in \Gamma_{I}} \int_{e_{k}}\Big\{ \tilde{\epsilon_{u}}e\frac{\partial \psi}{\partial \nu}\Big\}[e] \nonumber \\ 
 +\sum_{i=1}^{N_{h}}\int_{K_{i}}\{ \tilde{(\rho h_{d})}_{uu} e^{2}\}\vec{\beta}.\bold{n}\nabla \psi 
-\sum_{e_{k} \in \Gamma_{I}}\int_{e_{k}}\Big\{\tilde{(\rho h_{d})}_{uu} e^{2}\vec{\beta}.\bold{n}\Big\}[\psi].
\end{align}
The first term on the right hand side of (84) is revised as
\begin{align}
I=\mathscr{B}(u;u,\psi)-\mathscr{B}(u_{h};u,\psi) + \mathscr{B}(u_{h};u,\psi)  -\mathscr{B}(u_{h};u_{h},\psi ) \nonumber \\
=\mathscr{B}(u;u,\psi-\chi)-\mathscr{B}(u_{h};u,\psi-\chi) + \mathscr{B}(u_{h};u,\psi-\chi)  -\mathscr{B}(u_{h};u_{h},\psi-\chi ),
\end{align}
where $\chi=\mathcal{I}_{h}^{*}\psi$ such that $\chi|_{\partial \Omega}=0$.
\begin{align}
I=  \sum_{i=1}^{N_{h}} \int_{K_{i}}(\epsilon^{*}(u)-\epsilon^{*}(u_{h}) \nabla u \nabla ( \psi-\chi)
 -\int_{K_{i}}\Big((\rho h_{d})(u)-(\rho h_{d})(u_{h})\Big)\nabla ( \psi-\chi)\nonumber \\
-\sum_{i=1}^{N_{h}} \int_{K_{i}}(\epsilon^{*}(u)-\epsilon^{*}(u_{h}) \nabla (u-u_{h}) \nabla ( \psi-\chi) 
+\sum_{i=1}^{N_{h}} \int_{K_{i}}\epsilon^{*}(u)\nabla (u-u_{h}) \nabla ( \psi-\chi) 
\end{align}
By using Cauchy-Schwarz inequality, we can bound first, second and fourth terms on the right hand side of equation $(86)$ as
\begin{align}
\Big |\sum_{i=1}^{N_{h}} \int_{K_{i}}(\epsilon^{*}(u)-\epsilon^{*}(u_{h}) \nabla u \nabla ( \psi-\chi) \Big | \le C_{\epsilon}\vertiii{e} \lVert \psi-\chi \rVert_{H^{1}(\Omega)}\nonumber \\
\le C_{\epsilon}\frac{h}{p}\vertiii{e}\lVert \psi \rVert_{H^{2}(\Omega)},
\end{align}
\begin{align}
\Big |\sum_{i=1}^{N_{h}} \int_{K_{i}}\Big((\rho h_{d})(u)-(\rho h_{d})(u_{h})\Big)\nabla ( \psi-\chi) \Big |\le C_{\rho}\frac{h}{p}\vertiii{e}\lVert \psi \rVert_{H^{2}(\Omega)}
\end{align}
and
\begin{align}
\Big |\sum_{i=1}^{N_{h}} \int_{K_{i}}\epsilon^{*}(u) \nabla (u-u_{h}) \nabla ( \psi-\chi) \Big | \le C_{\epsilon}\frac{h}{p}\vertiii{e} \lVert \psi \rVert_{H^{2}(\Omega)}
\end{align}
The third term of right hand side of equation $(86)$ is estimated by using H\"{o}lder's inequality as
\begin{align}
\Big |\sum_{i=1}^{N_{h}} \int_{K_{i}}(\epsilon^{*}(u)-\epsilon^{*}(u_{h}) \nabla (u-u_{h}) \nabla ( \psi-\chi)\Big |\le C\lVert e \rVert_{L^{3}(\Omega)}\vertiii{e}\lVert \psi-\chi \rVert_{W^{1}_{6}(\Omega)}\nonumber \\
\le C\vertiii{e}^{2}\lVert \psi \rVert_{H^{2}(\Omega)}.
\end{align}
Now the second term of right hand side of equation (84) is bounded using H\"{o}lder's inequality as
\begin{align}
\Big | \sum_{i=1}^{N_{h}} \int_{K_{i}}\Big(e \tilde{\epsilon_{u}} \nabla e - \tilde{\epsilon_{uu}}e^{2}\nabla u \Big)\nabla \psi \Big |
\le \Big | \sum_{i=1}^{N_{h}} \int_{K_{i}}e \tilde{\epsilon_{u}} \nabla e \nabla \psi \Big |+ \Big |\sum_{i=1}^{N_{h}} \int_{K_{i}}\tilde{\epsilon_{uu}}e^{2}\nabla u \nabla \psi   \Big |\nonumber \\
\le C \vertiii{e}^{2} \lVert \psi \rVert_{H^{2}(\Omega)}
\end{align}
Now the third term of right hand side of equation (84) is estimated as
\begin{align}
 \Big|\sum_{e_{k} \in \Gamma_{I}} \int_{e_{k}}\Big( \Big\{ \tilde{\epsilon_{u}}e\frac{\partial e}{\partial \nu}\Big\} 
 - \Big\{ \tilde{\epsilon_{uu}}e^{2}\frac{\partial u}{\partial \nu}\Big\} \Big)[\psi]\Big|
 \le \nonumber \\
\Big|\sum_{e_{k} \in \Gamma_{I}} \int_{e_{k}}\Big\{ \tilde{\epsilon_{u}}e\frac{\partial e}{\partial \nu}\Big\}[\psi]\Big|
 +\Big|\sum_{e_{k} \in \Gamma_{I}} \int_{e_{k}}\Big\{ \tilde{\epsilon_{uu}}e^{2}\frac{\partial u}{\partial \nu}\Big\}[\psi]\Big|
\end{align}

\section{Conclusion}\label{section:con}
In this article, we have discuss and analyze interior-exterior penalty base discontinuous Galerkin finite element method for solving EHL line as well as point contact problems. Convergence of discrete DG solution is proved using Brouwer's fixed point theorem. We have shown that optimal order of convergence in $L^{2}$ and $H^{1}$ norms is achieved in mesh size $h$ theoretically. However, suboptimal order convergence is achieved in polynomial degree $p$.
\section*{Acknowledgment}
This work is fully funded by DST-SERB Project reference no.PDF/2017/000202 under N-PDF fellowship program 
and working group at the Tata Institute of Fundamental Research, TIFR-CAM, Bangalore. 
\bibliographystyle{plain}
\bibliography{ref}

\begin{thebibliography}{1}

\bibitem{arnold}
D.~N. Arnold.
\newblock An interior penalty finite element method with discontinuous
  elements.
\newblock {\em SIAM J. Numer. Anal.}, 15(1):742--760, 1982.

\bibitem{gudi07}
T.~Gudi and A.~K. Pani.
\newblock Discontinuous galerkin methods for quasi-linear elliptic problems of
  nonmonotone type.
\newblock {\em SIAM J. Numer. Anal.}, 45(1):163--192, 2007.

\bibitem{oden1985}
Oden J.~T and S.~R. Wu.
\newblock Existence of solutions to the reynolds equation of elastohydrodynamic
  lubrication.
\newblock {\em Int. J. Engng Sci.}, 23(2):207--215, 1985.

\bibitem{Scholz}
R.~Scholz.
\newblock Numerical solution of the obstacle problem by the penalty method.
\newblock {\em Computing}, 32(1):297--306, 1984.

\bibitem{peeyush2020}
Peeyush Singh and Prawal Sinha.
\newblock Interior-exterior penalty approach for solving elasto-hydrodynamic
  lubrication problem: Part {I}.
\newblock {\em Int. Jour. of Numeri. Anal. and Modeling.}, 17(5):695--731,
  2020.

\bibitem{gudi08}
N.~Nataraj T.~Gudi and A.~K. Pani.
\newblock hp-discontinuous galerkin methods for strongly nonlinear elliptic
  boundary value problems.
\newblock {\em Numerische Mathematik}.

\end{thebibliography}
\end{document}